\newtheorem{theorem}{Theorem}[section]
\newtheorem{lemma}[theorem]{Lemma}
\theoremstyle{definition}
\theoremstyle{plain}
\newenvironment{maintheorem}[1]
  {\innerthm}
  {\endinnerthm}
\numberwithin{equation}{section}
\numberwithin{figure}{section}
\renewcommand{\geq}{\geqslant}
\renewcommand{\leq}{\leqslant}
\setlist[enumerate]{leftmargin=20pt,itemsep=0pt,topsep=0pt}
\setlist[enumerate,1]{label=\emph{(\roman*)}}
\setlist[itemize]{leftmargin=20pt,itemsep=0pt,topsep=0pt}
\renewcommand\section{\@startsection {section}{1}{\z@}%
                                   {-3.5ex \@plus -1ex \@minus -.2ex}%
                                   {1.3ex \@plus.2ex}%
                                   {\normalfont\large\scshape}}
\title{ \vspace{-6ex}\bf \large Enumerating tame friezes over $\mathbb{Z}/n\mathbb{Z}$\footnotetext{
		\noindent 2010 Mathematics Subject Classification: Primary 05E16; Secondary 11B57.
		
		Key words: frieze, Farey complex, $\text{SL}_2$-tiling. 
		
		Supported by EPSRC grant EP/W002817/1 (IS, MvS) and EPSRC DTP grants EP/W524098/1 (AZ) and EP/T518165/1 (SB, internship).
		}}
\renewcommand*{\@fnsymbol}[1]{\hspace*{-10pt}}
\author{Sammy Benzaira, Ian Short, Matty van Son, and Andrei Zabolotskii}
\date{\vspace{-5ex}}
\begin{document}

\maketitle

\begin{abstract}
We use a class of Farey graphs introduced by the final three authors to enumerate the tame friezes over $\mathbb{Z}/n\mathbb{Z}$. Using the same strategy we enumerate the tame regular friezes over $\mathbb{Z}/n\mathbb{Z}$, thereby reproving a recent result of B\"ohmler, Cuntz, and Mabilat.
\end{abstract}

\section{Introduction}

Our objective here is to enumerate the tame friezes over the ring $\mathbb{Z}/n\mathbb{Z}=\{0,1,\dots,n-1\}$. To achieve this, we use the correspondence between tame friezes and paths in a class of graphs termed \emph{Farey graphs} by the final three authors in \cite{ShSoZa2024}. This approach gives us relatively short and simple arguments with a geometric flavour. 

A \emph{frieze} over $\mathbb{Z}/n\mathbb{Z}$ is an array of finitely many bi-infinite rows of elements of $\mathbb{Z}/n\mathbb{Z}$ offset alternately as in Figure~\ref{fig1}, with $0$'s on the first and last rows, and with the property that any diamond of four contiguous entries satisfies the rule $ad-bc=1$ in $\mathbb{Z}/n\mathbb{Z}$. 

\begin{figure}[ht] \centering
	\begin{subfigure}[b]{0.7\textwidth}
 \centering
	\(
		\vcenter{
			\xymatrix @-0.9pc @!0 {
				& 0 && 0 && 0 && 0 && 0 && 0 && 0 && 0 && 0 && 0 & \\
				&& 1 && 1 && 1 && 1 && 1 && 1 && 1 && 1 && 1 && \\      
\raisebox{-11pt}{\ldots}\,  & 1 && 2 && 4 && 3 && 1 && 2 && 4 && 3 && 1 && 2 &   \,\raisebox{-11pt}{\ldots}  \\
				  && 1 && 2 && 1 && 2 && 1 && 2 && 1 && 2 && 1 && \\
                   & 1 && 3 && 4 && 2 && 1 && 3 && 4 && 2 && 1 && 3   \\
				&& 2 && 3 && 2 && 3 && 2 && 3 && 2 && 3 && 2 && \\
				& 0 && 0 && 0 && 0 && 0 && 0 && 0 && 0 && 0 && 0 &    \\
			}
		}
	\)
	\end{subfigure}\quad
		\begin{subfigure}[b]{0.2\textwidth}\centering	
\(\vcenter{\
	\xymatrix @-1.0pc @!0 {
		&b &\\
		a && d\\
		& c &
	}
}\)
	\end{subfigure}
	\caption{A tame frieze over $\mathbb{Z}/5\mathbb{Z}$ of width 6 (left) and a diamond of four entries (right)}
        \label{fig1}
\end{figure}
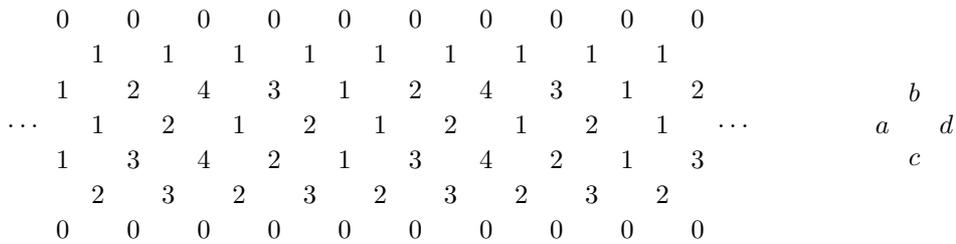 

The \emph{width} of the frieze is the number of rows minus one. The frieze is \emph{regular} if the second and second-last rows comprise 1's only. These definitions of `width' and `regular' are consistent with \cite{ShSoZa2024} but at odds with some other literature. A frieze is \emph{tame} if any diamond of nine contiguous entries has determinant 0. See \cite{ShSoZa2024} for  formal definitions of  these concepts.

There has been significant interest in enumerating friezes over finite rings recently. In \cite{Mo2021}, Morier-Genoud enumerated the regular tame friezes over any finite field. This result was reproved in \cite{ShSoZa2024} where tame friezes over finite fields (not necessarily regular) were also enumerated. A string of works by B\"ohmler, Cuntz, and Mabilat have enumerated the regular tame friezes over $\mathbb{Z}/n\mathbb{Z}$; the most recent and comprehensive of these works are \cite{BoCu2024,Ma2024}. These authors also consider other problems related to enumerating friezes and they consider other finite rings. Here we present two results: the first on enumerating tame friezes over $\mathbb{Z}/n\mathbb{Z}$ and for the second we offer a concise proof of the recently discovered enumeration of tame regular friezes. 

We denote by $\nu_p(n)$ the $p$-adic valuation of $n$, which is the highest power of the prime $p$ in the prime factorisation of $n$. The products in both theorems are taken over prime divisors of $n$. 

\begin{maintheorem}{A}\label{theoremA}
The number of tame friezes of width $m$ over the ring $\mathbb{Z}/n\mathbb{Z}$ is
\[
\prod_{p|n}\frac{p^{(\nu_p(n)-1)(m-1)}(p^{m-1}+(-1)^m)(p-1)}{p+1}.
\]
\end{maintheorem}

For the second theorem, following Morier-Genoud \cite{Mo2021}, we write
\[
[k]_{q} = \frac{q^{k}-1}{q-1}\quad\text{and}\quad \binom{k}{2}_{\!\!q} = \frac{(q^k-1)(q^{k-1}-1)}{(q-1)(q^2-1)},
\]
where $q$ is an integer greater than 1 (and $[k]_q=k$ if $q=1$).

\begin{maintheorem}{B}\label{theoremB}
The number of regular tame friezes of width $m$ over the ring $\mathbb{Z}/n\mathbb{Z}$ is
\[
\prod_{p|n}\Phi_m\big(p^{\nu_p(n)}\big),
\]
where $\Phi_m(p^r) =p^{(r-1)(m-3)}[k]_{p^2}$ for $m=2k+1$, and for $m=2k$, 
\[
\Phi_m(p^r) =
\begin{cases}
p^{(r-1)(m-3)}(p-1)\displaystyle\binom{k}{2}_{\!\!p} & \text{for $k$ even, $p\neq 2$,}\\
p^{(r-1)(m-3)}\mleft(\!(p-1)\displaystyle\binom{k}{2}_{\!\!p}+p^{k-1}-1\!\mright) & \text{for $k$ even, $p=2$, $r\neq 1$,}\\
p^{(r-1)(m-3)}\mleft(\!(p-1)\mleft(\!\displaystyle\binom{k}{2}_{\!\!p}+[r-1]_{p^{2-k}}\!\mright)+p^{k-1}\!\mright) & \text{otherwise.}
\end{cases}
\]
\end{maintheorem}

\section{Farey graphs}

We will use just one class of Farey graphs from \cite{ShSoZa2024}, namely the directed graphs $\mathscr{E}_n$ associated to the rings $\mathbb{Z}/n\mathbb{Z}$. These graphs are double covers of the 1-skeletons of Platonic graphs ($\mathscr{E}_3$ covers the tetrahedron, $\mathscr{E}_4$ the octahedron, and so forth). The vertices of $\mathscr{E}_n$ are pairs $(a,b)$, where $a,b\in\{0,1,\dots,n-1\}$ and $\gcd(a,b,n)=1$, and there is a a directed edge from vertex $(a,b)$ to  vertex $(c,d)$ if $ad-bc=1$ in $\mathbb{Z}/n\mathbb{Z}$. We denote the vertex $(a,b)$ by a formal fraction $a/b$. 
On some occasions we represent a vertex $a/b$ by $a'/b'$ for some pair $a'$ and $b'$ of integers congruent to $a$ and $b$; for example, we often write $-1/0$ in place of $(n-1)/0$.

The group $\textnormal{SL}_2(\mathbb{Z}/n\mathbb{Z})$ acts on $\mathscr{E}_n$ by the rule
\[
\frac{x}{y}\longmapsto \frac{ax+by}{cx+dy},\quad\text{where }A=\begin{pmatrix}a &b \\ c&d\end{pmatrix}\in \textnormal{SL}_2(\mathbb{Z}/n\mathbb{Z}).
\]
This action is simply transitive on directed edges (see \cite[Proposition~2.3]{ShSoZa2024}) in the sense that, for any pair of directed edges $\gamma$ and $\delta$, there is a unique matrix $A$ that sends $\gamma$ to $\delta$.

Next we describe how to obtain $\mathscr{E}_{mn}$ from  $\mathscr{E}_{m}$ and $\mathscr{E}_{n}$ when $m$ and $n$ are coprime. For this we need to introduce tensor products of directed graphs. Consider two directed graphs $G$ and $H$, with vertex and directed edge sets $(V_G,E_G)$ and $(V_H,E_H)$. The \emph{tensor product} $G\otimes H$ of $G$ and $H$ is the directed graph with vertices $(u,v)$, where $u\in V_G$ and $v\in V_H$, and with a directed edge from $(u_1,v_1)$ to $(u_2,v_2)$ if and only if $u_1\to u_2$ belongs to $E_G$ and $v_1\to v_2$ belongs to $E_H$. (Here and below $x\to y$ denotes the directed edge from vertex $x$ to vertex $y$.)

In proving the next lemma (and later on) we write $a\bmod n$ for the integer in $\{0,1,\dots,n-1\}$ that is congruent to $a$ modulo $n$.

\begin{lemma}\label{lemma1}
Let $m$ and $n$ be coprime. Then $\mathscr{E}_{mn}\cong \mathscr{E}_{m}\otimes \mathscr{E}_{n}$.
\end{lemma}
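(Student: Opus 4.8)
The plan is to realise the claimed isomorphism as the map induced on vertices and edges by the Chinese Remainder Theorem. Since $\gcd(m,n)=1$, reduction modulo $m$ and modulo $n$ gives a ring isomorphism $\mathbb{Z}/mn\mathbb{Z}\to(\mathbb{Z}/m\mathbb{Z})\times(\mathbb{Z}/n\mathbb{Z})$, and I would define a map $\Phi\colon\mathscr{E}_{mn}\to\mathscr{E}_m\otimes\mathscr{E}_n$ on vertices by
\[
\Phi\mleft(\frac{a}{b}\mright)=\mleft(\frac{a\bmod m}{b\bmod m},\,\frac{a\bmod n}{b\bmod n}\mright).
\]

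First I would check that $\Phi$ lands in the vertex set of the tensor product, that is, that $(a\bmod m)/(b\bmod m)$ is a genuine vertex of $\mathscr{E}_m$ and likewise for $n$. This is a gcd computation: since reducing $a$ and $b$ modulo $m$ does not change their gcd with $m$, we have $\gcd(a\bmod m,b\bmod m,m)=\gcd(a,b,m)$, and the latter divides $\gcd(a,b,mn)=1$ because $m\mid mn$. Hence $\gcd(a\bmod m,b\bmod m,m)=1$, and symmetrically for $n$.

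Next, bijectivity. I would produce the inverse directly from the Chinese Remainder Theorem: given vertices $a_1/b_1$ of $\mathscr{E}_m$ and $a_2/b_2$ of $\mathscr{E}_n$, let $a,b\in\{0,\dots,mn-1\}$ be the unique residues with $a\equiv a_i$ and $b\equiv b_i$ modulo $m$ and $n$ respectively. Then $a/b$ is a vertex of $\mathscr{E}_{mn}$: any prime $p\mid mn$ divides exactly one of $m$ and $n$ (as these are coprime), and if, say, $p\mid m$ then $p\mid\gcd(a,b,m)=\gcd(a_1,b_1,m)=1$, a contradiction, so $\gcd(a,b,mn)=1$. These two constructions are mutually inverse by the uniqueness clause of the Chinese Remainder Theorem, so $\Phi$ is a bijection on vertices.

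Finally, edge preservation, which is where the argument is cleanest rather than hardest. There is a directed edge $a/b\to c/d$ in $\mathscr{E}_{mn}$ exactly when $ad-bc\equiv 1\pmod{mn}$, and by the Chinese Remainder Theorem this is equivalent to the simultaneous congruences $ad-bc\equiv1\pmod m$ and $ad-bc\equiv1\pmod n$. The first says precisely that there is an edge between the $\mathscr{E}_m$-components of $\Phi(a/b)$ and $\Phi(c/d)$, and the second the same for the $\mathscr{E}_n$-components; together this is exactly the defining condition for an edge in $\mathscr{E}_m\otimes\mathscr{E}_n$. Thus $\Phi$ and its inverse both preserve edges, completing the isomorphism. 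I expect the only real care to be needed in the two well-definedness checks on vertices, keeping straight that the gcd conditions transfer correctly across the reduction and lifting maps, since the edge condition splits across $m$ and $n$ immediately via the ring isomorphism.
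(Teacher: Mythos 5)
Your proposal is correct and takes essentially the same route as the paper: the paper defines the same reduction map and its Chinese Remainder Theorem inverse, declaring the well-definedness and homomorphism checks ``straightforward,'' whereas you spell them out. The gcd verifications and the splitting of $ad-bc\equiv 1\pmod{mn}$ into the two congruences are exactly the details the paper omits, and you have them right.
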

\begin{proof}
Consider the map $\alpha\colon \mathscr{E}_{mn}\longrightarrow \mathscr{E}_{m}\otimes \mathscr{E}_{n}$ defined as follows. Given $a/b\in \mathscr{E}_{mn}$ we let $a_1= a \bmod m$, $b_1= b \bmod m$, $a_2= a \bmod n$, $b_2= b \bmod n$, and define $\alpha(a/b)=(a_1/b_1,a_2/b_2)$. It is straightforward to check that $\alpha$ is a well-defined graph homomorphism.

Next consider the map $\beta\colon \mathscr{E}_{m}\otimes \mathscr{E}_{n} \longrightarrow\mathscr{E}_{mn}$ defined as follows. For $a_1/b_1\in \mathscr{E}_m$ and $a_2/b_2\in \mathscr{E}_n$ we let $a\in\{0,1,\dots,mn-1\}$ be the unique solution of the congruences $x\equiv a_1 \pmod m$ and $x\equiv a_2 \pmod n$ and we let $b\in\{0,1,\dots,mn-1\}$ be the unique solution of the congruences $x\equiv b_1 \pmod m$ and $x\equiv b_2 \pmod n$, and then we define $\beta(a_1/b_1,a_2/b_2)=a/b$. Again, it is straightforward to check that $\beta$ is a well-defined graph homomorphism.  

A short calculation shows that $\alpha$ and $\beta$ are mutually inverse; hence $\mathscr{E}_{mn}\cong \mathscr{E}_{m}\otimes \mathscr{E}_{n}$.
\end{proof}

Our strategy involves lifting paths from $\mathscr{E}_{p}$ to $\mathscr{E}_{p^r}$, for a prime $p$ and positive integer $r$, and then using known results on enumerating paths in $\mathscr{E}_{p}$. For more delicate arguments we need to lift from $\mathscr{E}_{p^{s-1}}$ to $\mathscr{E}_{p^s}$ one stage at a time, for $s=2,3,\dots,r$.

Consider the graph homomorphism $\theta\colon \mathscr{E}_{p^r}\longrightarrow\mathscr{E}_{p^{r-1}}$ given by $\theta(a/b)= (a \bmod p^{r-1})/(b \bmod p^{r-1})$, which maps vertices $p^2$-to-1. It satisfies the equivariance property $\theta\circ A=\widehat{A}\circ \theta$, where $A\in \textnormal{SL}_2(\mathbb{Z}/p^r\mathbb{Z})$ and $\widehat{A}$ is the image of $A$  under the  homormophism $\textnormal{SL}_2(\mathbb{Z}/p^r\mathbb{Z})\longrightarrow \textnormal{SL}_2(\mathbb{Z}/p^{r-1}\mathbb{Z})$ given by reduction modulo $p^{r-1}$. A \emph{lift} to $\mathscr{E}_{p^r}$ of a vertex $v$ in $\mathscr{E}_{p^{r-1}}$ is a vertex $\bar{v}\in\mathscr{E}_{p^r}$ with $\theta(\bar{v})=v$. We use similar terminology for lifting directed edges and paths from $\mathscr{E}_{p^{r-1}}$ to $\mathscr{E}_{p^{r}}$ and from $\mathscr{E}_{p}$ to $\mathscr{E}_{p^{r}}$.

Fundamental to our strategy is the following basic path-lifting lemma.
\begin{lemma}\label{lemma2}
Let $\gamma$ be a path of length $m$ in $\mathscr{E}_{p^{r-1}}$ with initial vertex $v$, and let $\bar{v}$ be a lift of $v$ to $\mathscr{E}_{p^r}$. Then there are precisely $p^m$ different lifts of $\gamma$ to $\mathscr{E}_{p^r}$ with initial vertex $\bar{v}$.
\end{lemma}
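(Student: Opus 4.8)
The plan is to prove the lemma by induction on the length $m$, with everything resting on a single-edge counting statement: \emph{given a directed edge $u\to w$ in $\mathscr{E}_{p^{r-1}}$ and a lift $\bar u$ of $u$ to $\mathscr{E}_{p^r}$, there are exactly $p$ lifts $\bar w$ of $w$ for which $\bar u\to\bar w$ is a directed edge of $\mathscr{E}_{p^r}$.} Granting this, the lemma is immediate. The base case $m=0$ is the single lift $\bar v$ of the one-vertex path. For the inductive step, write $\gamma=v_0\to v_1\to\cdots\to v_m$ with $v_0=v$; every lift of the initial segment $v_0\to\cdots\to v_{m-1}$ starting at $\bar v$ terminates at some lift $\bar v_{m-1}$ of $v_{m-1}$, and by the single-edge statement each such lift extends in exactly $p$ ways across the final edge $v_{m-1}\to v_m$. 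Hence the number of lifts passes from $p^{m-1}$ to $p^{m-1}\cdot p=p^m$.

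To prove the single-edge statement I would argue directly. Write $u=a/b$ and $w=c/d$ with $ad-bc\equiv 1\pmod{p^{r-1}}$, and fix the lift $\bar u=\bar a/\bar b$. The lifts of $w$ are precisely the vertices $\bar w=(c+sp^{r-1})/(d+tp^{r-1})$ for $s,t\in\{0,\dots,p-1\}$, and these are genuine vertices of $\mathscr{E}_{p^r}$ because $\gcd(c,d,p^{r-1})=1$ forces at least one of $c,d$, hence of $\bar c,\bar d$, to be a unit modulo $p$. Expanding the edge condition $\bar a\bar d-\bar b\bar c\equiv 1\pmod{p^r}$ and using $\bar a d-\bar b c\equiv ad-bc\equiv 1\pmod{p^{r-1}}$, the condition collapses to a single linear congruence of the shape $\bar a\,t-\bar b\,s\equiv \ell\pmod p$ for a fixed $\ell$ depending only on $\bar a,\bar b,c,d$. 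Since $\gcd(\bar a,\bar b,p)=1$, one of $\bar a,\bar b$ is invertible modulo $p$, so this congruence has exactly $p$ solutions $(s,t)$ in $\{0,\dots,p-1\}^2$, giving exactly $p$ lifts $\bar w$.

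A more geometric alternative exploits the simply transitive action of $\textnormal{SL}_2(\mathbb{Z}/p^r\mathbb{Z})$ on directed edges together with the equivariance $\theta\circ A=\widehat{A}\circ\theta$. Since reduction $\textnormal{SL}_2(\mathbb{Z}/p^r\mathbb{Z})\to\textnormal{SL}_2(\mathbb{Z}/p^{r-1}\mathbb{Z})$ is surjective, I can transport the pair consisting of $\bar u$ and the edge $u\to w$ to the standard configuration $\bar u=0/1$, $w=-1/0$; there the lifts of $w$ forming a directed edge from $0/1$ are exactly $(-1)/(tp^{r-1})$ for $t\in\{0,\dots,p-1\}$, again exactly $p$ of them. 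Either route reduces the whole problem to this one count.

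I expect the single-edge count to be the crux, and within it the delicate point is \emph{uniformity}: one must verify the count is exactly $p$ for every edge and every choice of lift $\bar u$, not merely on average against the fact that $\theta$ is $p^2$-to-$1$ on vertices. This is exactly where the coprimality condition $\gcd(\bar a,\bar b,p)=1$ does the work, guaranteeing that the linear congruence is nondegenerate so that its solution set has size exactly $p$; one should also confirm along the way that all $p^2$ arithmetic lifts of $w$ are legitimate vertices. Once the single-edge count is secured, the induction is routine.
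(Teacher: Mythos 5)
Your proposal is correct and follows essentially the same strategy as the paper: reduce to counting lifts of a single directed edge, show each edge contributes a factor of exactly $p$, and conclude edge by edge. The paper establishes the single-edge count by normalizing to the edge $1/0\to 0/1$ via the simply transitive $\textnormal{SL}_2(\mathbb{Z}/p^r\mathbb{Z})$-action (precisely your ``geometric alternative''), while your primary route---the linear congruence $\bar a\,t-\bar b\,s\equiv\ell\pmod p$ in the lift parameters, nondegenerate because $\gcd(\bar a,\bar b,p)=1$---is a slightly more computational but equally valid variant of the same count.
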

\begin{proof}
Suppose first that $m=1$, in which case we are merely lifting a directed edge $v\to w$. After applying a suitable element of $\textnormal{SL}_2(\mathbb{Z}/p^r\mathbb{Z})$ we can assume that $v\to w$ is the directed edge $1/0\to 0/1$ (in $\mathscr{E}_{p^{r-1}}$) and $\bar{v}=1/0$ (in $\mathscr{E}_{p^r}$). Then there are precisely $p$ lifts of $\gamma$, namely the directed edges $1/0\to ap^{r-1}/1$, for $a=0,1,\dots,p-1$. For the general case, we simply apply this argument edge by edge to obtain $p^m$ lifts of $\gamma$.
\end{proof}

On one occasion later we apply Lemma~\ref{lemma2} in reverse form, where the final rather than the initial vertex of every lift of $\gamma$ is fixed. We also apply Lemma~\ref{lemma2} with the same hypotheses except that $\gamma$ lies in $\mathscr{E}_{p^s}$ rather than $\mathscr{E}_{p^{r-1}}$; in this case the number of lifts is $p^{(r-s)m}$, as we can see by lifting one stage at a time from $\mathscr{E}_{p^s}$ up to $\mathscr{E}_{p^r}$

\section{Proof of Theorem~A}

To prove Theorem~A we use Theorems~1.4 and~1.7 from \cite{ShSoZa2024}. The first of these two results is paraphrased in the following theorem, in which we say  that two vertices $u$ and $v$ of $\mathscr{E}_n$ are \emph{equivalent}  if $u=\lambda v$, for $\lambda\in(\mathbb{Z}/n\mathbb{Z})^\times$, the group of units of $\mathbb{Z}/n\mathbb{Z}$.

\begin{theorem}\label{theorem1}
There is a one-to-one correspondence between
\[
\hspace*{-10pt}\textnormal{SL}_2(\mathbb{Z}/n\mathbb{Z})\Big\backslash\mleft\{\parbox{4.25cm}{\centering\textnormal{paths~of length~$m$~between equivalent~vertices~in~$\mathscr{E}_{n}$}}\mright\}\quad \longleftrightarrow\quad (\mathbb{Z}/n\mathbb{Z})^\times\Big\backslash\mleft\{\parbox{2.8cm}{\centering\textnormal{tame~friezes~over $\mathbb{Z}/n\mathbb{Z}$~of~width~$m$}}\mright\}.
\]
\end{theorem}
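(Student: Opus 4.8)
The plan is to exhibit the claimed bijection by turning a frieze into a sequence of vertices of $\mathscr{E}_n$ and back, and then checking that the two stated quotients match up. The whole construction rests on the elementary $2\times 2$ determinant identity
\[
\det(v_i,v_j)\,\det(v_{i'},v_{j'})-\det(v_i,v_{j'})\,\det(v_{i'},v_j)=\det(v_i,v_{i'})\,\det(v_j,v_{j'}),
\]
the Grassmann--Pl\"ucker three-term relation, which I will use to translate the diamond rule into the edge condition of $\mathscr{E}_n$.

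First I would go from a tame frieze $F$ of width $m$ to a path. The tameness hypothesis (vanishing of every $3\times 3$ diamond) is exactly what allows the entries of $F$ to be factored through a rank-$2$ module: it forces a three-term linear recurrence linking contiguous entries, and this lets me attach to the positions of $F$ a sequence of vectors $v_j\in(\mathbb{Z}/n\mathbb{Z})^2$ so that every entry of $F$ has the form $\det(v_i,v_j)$. Under this description the diamond rule $ad-bc=1$ applied to four neighbouring entries becomes, via the Pl\"ucker relation above, precisely the edge condition $\det(v_{j},v_{j+1})=1$. I would then check that the normalisations built into the definition of a frieze force $\gcd(a,b,n)=1$, so that each $v_j$ is a genuine vertex and consecutive vectors trace a path in $\mathscr{E}_n$. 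The bounding rows of $0$'s say exactly that $\det(v_0,v_m)=0$; since both vectors are primitive, this gives $v_m=\lambda v_0$ for a unit $\lambda$, so the length-$m$ path runs between equivalent vertices.

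Conversely, from a path $v_0\to\dots\to v_m$ between equivalent vertices I would reconstruct a frieze by declaring its entries to be the determinants $\det(v_i,v_j)$ and using the relation $v_m=\lambda v_0$ as a gluing rule to extend to a bi-infinite pattern. The same Pl\"ucker identity shows the diamond rule holds; tameness is automatic because all entries are $2\times 2$ minors of vectors lying in a rank-$2$ module; and the endpoint equivalence produces the bounding rows of $0$'s. These two assignments are visibly mutually inverse, which establishes the bijection before quotients.

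Finally I would match the symmetries. Because $\det$ is invariant under $\textnormal{SL}_2(\mathbb{Z}/n\mathbb{Z})$, applying a matrix $A$ to the whole sequence leaves every entry $\det(v_i,v_j)$, and hence the frieze, unchanged, so the construction factors through $\textnormal{SL}_2$-orbits of paths; and simple transitivity on directed edges (noted above) lets me pin a representative by sending the first edge to $1/0\to 0/1$, so the orbit can be recovered from the frieze. The one genuine ambiguity remaining in the factorisation is the freedom to rescale vertices by units, which is visible in the frieze as the unit $t$ making each row adjacent to a bounding $0$-row read $t,t^{-1},t,t^{-1},\dots$, and this is what descends to the action of $(\mathbb{Z}/n\mathbb{Z})^\times$ on friezes. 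I expect the main obstacle to be exactly this last bookkeeping: extracting the vertex factorisation from tameness in a canonical way, and then verifying with no over- or under-counting that $\textnormal{SL}_2$-orbits of paths correspond bijectively to unit-orbits of friezes, which requires carefully reconciling how $\textnormal{SL}_2$-normalisation, unit rescaling, and horizontal translation of the pattern interact.
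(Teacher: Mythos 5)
The paper does not prove this statement at all: Theorem~\ref{theorem1} is quoted as a special case of \cite[Theorem~1.4]{ShSoZa2024}, so there is no internal proof to compare against. Your sketch is essentially a reconstruction of the argument of that reference: encode frieze entries as determinants $\det(v_i,v_j)$ of a sequence of vectors, use the Grassmann--Pl\"ucker relation to convert the diamond rule into the edge condition $\det(v_j,v_{j+1})=1$, and read the bounding rows of zeros as the endpoint condition $v_m=\lambda v_0$. That outline is sound, and two of the steps you leave as ``I would check'' do go through cleanly: primitivity of each $v_j$ follows because $(v_j\,|\,v_{j+1})\in\textnormal{SL}_2(\mathbb{Z}/n\mathbb{Z})$, and the extraction of the rank-$2$ factorisation from tameness works over $\mathbb{Z}/n\mathbb{Z}$ (not just over a field) precisely because the adjacent $2\times 2$ minors are equal to $1$, hence invertible, so a vanishing $3\times 3$ diamond really does yield a three-term linear recurrence; you should say this explicitly, since over a ring with zero divisors a vanishing determinant alone gives no dependence relation.

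The genuine gap is the one you flag yourself and then do not close: the identification of the two quotients. You describe the residual ambiguity as ``the freedom to rescale vertices by units,'' but rescaling the whole sequence $v_j\mapsto\mu v_j$ multiplies every determinant by $\mu^2$ and destroys the edge condition $\det(v_j,v_{j+1})=1$ unless $\mu^2=1$, so this cannot be the action that descends to $(\mathbb{Z}/n\mathbb{Z})^\times$ on friezes. What actually has to be matched is the parameter $\lambda\in(\mathbb{Z}/n\mathbb{Z})^\times$ in the endpoint relation $v_m=\lambda v_0$ (equivalently, the unit appearing in the rows adjacent to the bounding zero rows), together with the translation symmetry of the bi-infinite array: one must show that $\textnormal{SL}_2$-orbits of paths from $v$ to $\lambda v$, as $\lambda$ ranges over units, collapse to a single tame frieze exactly according to the stated $(\mathbb{Z}/n\mathbb{Z})^\times$-quotient, with no over- or under-counting from horizontal shifts. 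This is the substantive content of \cite[Theorem~1.4]{ShSoZa2024} (where it is organised via \emph{semiregular} friezes), and as written your argument asserts rather than establishes it.
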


This theorem is a special case of \cite[Theorem~1.4]{ShSoZa2024} for the ring $\mathbb{Z}/n\mathbb{Z}$ and Farey graph $\mathscr{E}_n$. Also, for convenience, we have framed this result in terms of tame friezes rather than tame \emph{semiregular} friezes by taking a quotient of $(\mathbb{Z}/n\mathbb{Z})^\times$ (see \cite{ShSoZa2024} for more on semiregular friezes).

The next theorem is the special case of \cite[Theorem~1.7]{ShSoZa2024} for the field $\mathbb{Z}/p\mathbb{Z}$.

\begin{theorem}\label{theorem2}
The number of tame  friezes of width $m$ over $\mathbb{Z}/p\mathbb{Z}$ is
\[
\frac{(p^{m-1}+(-1)^m)(p-1)}{p+1}.
\]
\end{theorem}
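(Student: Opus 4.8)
The plan is to deduce Theorem~\ref{theorem2} from the correspondence in Theorem~\ref{theorem1} by counting paths in $\mathscr{E}_p$. Since $p$ is prime, $\mathbb{Z}/p\mathbb{Z}$ is a field and the equivalence classes of vertices of $\mathscr{E}_p$ are precisely the $p+1$ points of the projective line $\mathbb{P}^1(\mathbb{Z}/p\mathbb{Z})$; I write $[v]$ for the class of a vertex $v$. By Theorem~\ref{theorem1} the number of tame friezes equals $p-1$ times the number of $\textnormal{SL}_2(\mathbb{Z}/p\mathbb{Z})$-orbits of paths of length $m$ between equivalent vertices, \emph{provided} the action of $(\mathbb{Z}/p\mathbb{Z})^\times$ on tame friezes is free so that each orbit accounts for exactly $p-1$ friezes. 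Establishing this freeness, which is what produces the factor $p-1$, is the one genuinely delicate point, and I would dispatch it first by working directly from the definition of the unit action (noting that the second row of a frieze is forced to have the form $x,x^{-1},x,x^{-1},\dots$ for a unit $x$, on which scaling acts nontrivially).

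To count the orbits I would use that $\textnormal{SL}_2(\mathbb{Z}/p\mathbb{Z})$ acts simply transitively on directed edges, so the stabiliser of an edge is trivial. Hence every orbit of paths of length $m\geq1$ contains exactly one representative whose initial edge is a fixed edge $v_0\to v_1$, and the number of orbits equals the number of walks $v_1\to v_2\to\dots\to v_m$ of length $m-1$ starting at the fixed vertex $v_1$ and ending at some vertex equivalent to $v_0$.

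The key step is a reduction of this walk count to one in the complete graph $K_{p+1}$ on the vertex set $\mathbb{P}^1(\mathbb{Z}/p\mathbb{Z})$. I claim that the $p$ out-neighbours of any vertex $a/b$ represent, bijectively, the $p$ projective points other than $[a/b]$: the solutions $(c,d)$ of $ad-bc=1$ form an affine line, none of whose points is proportional to $(a,b)$ and no two of whose points are proportional to each other. Consequently a walk in $\mathscr{E}_p$ from a fixed initial vertex is determined by, and determines, the sequence of classes of its vertices, and that sequence is exactly a walk in $K_{p+1}$; moreover the terminal vertex is equivalent to the initial one precisely when their classes coincide. Thus the orbit count equals the number of walks of length $m-1$ in $K_{p+1}$ between the two distinct fixed points $[v_1]$ and $[v_0]$.

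Finally I would read this number off the spectrum of $K_{p+1}$. Its adjacency matrix is $J-I$, with eigenvalues $p$ (simple) and $-1$ (multiplicity $p$), so the number of walks of length $m-1$ between two distinct vertices is $\tfrac{1}{p+1}\big(p^{m-1}-(-1)^{m-1}\big)=\tfrac{1}{p+1}\big(p^{m-1}+(-1)^{m}\big)$. Multiplying by $p-1$ yields the stated formula. The walk-counting here is routine linear algebra; the main obstacle, as flagged above, is the freeness of the $(\mathbb{Z}/p\mathbb{Z})^\times$-action, which is the sole source of the factor $p-1$ and must be verified from the definition of the action on friezes rather than from the graph-theoretic bookkeeping.
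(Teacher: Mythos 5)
Your argument is essentially correct, but note that the paper does not prove this statement at all: Theorem~\ref{theorem2} is simply quoted as the special case of \cite[Theorem~1.7]{ShSoZa2024} for the field $\mathbb{Z}/p\mathbb{Z}$, so you have supplied a proof where the paper supplies a citation. Your route --- fix the initial edge $1/0\to 0/1$ using simple transitivity of $\textnormal{SL}_2(\mathbb{Z}/p\mathbb{Z})$ on directed edges, observe that the $p$ out-neighbours of a vertex $a/b$ of $\mathscr{E}_p$ represent bijectively the $p$ points of $\mathbb{P}^1(\mathbb{Z}/p\mathbb{Z})$ other than $[a/b]$, and hence identify the orbit count $|X_m(p)|$ with the number of walks of length $m-1$ between two distinct vertices of the complete graph $K_{p+1}$, read off from the spectrum of $J-I$ as $(p^{m-1}+(-1)^m)/(p+1)$ --- is sound (your verification that the out-neighbours form an affine line with no two points proportional is exactly what is needed), and it is in effect the argument one would expect to find in the cited reference; it makes the note self-contained at the cost of reproducing material the authors chose to import. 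The one step to be careful about is the factor $p-1$: Theorem~\ref{theorem1} only gives a bijection of quotient sets, and passing from the orbit count to $\varphi(p)\,|X_m(p)|$ requires the $(\mathbb{Z}/p\mathbb{Z})^\times$-action on tame friezes to be free. You rightly flag this, but your sketch (``scaling acts nontrivially on the second row $x,x^{-1},x,\dots$'') cannot be completed from this paper alone, since the definition of that action lives in \cite{ShSoZa2024}; the paper has the same dependence, as its proof of Theorem~A likewise asserts without argument that the number of friezes is $\varphi(n)|X_m(n)|$. So this is not a gap you introduced, but it is the one step of your proof that cannot be closed without the companion paper.
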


Let $\langle v_0,v_1,\dots,v_m\rangle$ denote the path in $\mathscr{E}_n$ with vertices $v_0,v_1,\dots,v_m$, in that order. We define $X_m(n)$ to be the collection of all paths of length $m$ in $\mathscr{E}_n$ with $v_0=1/0$, $v_1=0/1$, and $v_m$ equivalent to $v_0$. Since $\textnormal{SL}_2(\mathbb{Z}/n\mathbb{Z})$ acts simply transitively on directed edges in $\mathscr{E}_n$, the cardinality $|X_m(n)|$ of $X_m(n)$ is equal to that of 
\[
\textnormal{SL}_2(\mathbb{Z}/n\mathbb{Z})\Big\backslash\mleft\{\parbox{4.25cm}{\centering\textnormal{paths~of length~$m$~between equivalent~vertices~in~$\mathscr{E}_{n}$}}\mright\}.
\]
Theorem~\ref{theorem1} then tells us that the number of tame friezes of width $m$ over $\mathbb{Z}/n\mathbb{Z}$ is $\varphi(n)|X_m(n)|$, where $\varphi$ is Euler's totient function (and $\varphi(n)$ is the order of $(\mathbb{Z}/n\mathbb{Z})^\times$). By Lemma~\ref{lemma1}, the function $\varphi(n)|X_m(n)|$ is multiplicative in $n$, so it suffices to prove Theorem~\ref{theoremA} when $n$ is a prime power $p^r$.

\begin{lemma}\label{lemma3}
Any path in $X_m(p)$ has precisely $p^{(r-1)(m-2)}$ lifts to $X_m(p^r)$.
\end{lemma}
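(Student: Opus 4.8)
The plan is to build each lift of $\gamma=\langle v_0,\dots,v_m\rangle$ edge by edge, using Lemma~\ref{lemma2} in the form that lifts a single directed edge from $\mathscr{E}_p$ to $\mathscr{E}_{p^r}$ in exactly $p^{r-1}$ ways once its initial vertex has been lifted. A lift of $\gamma$ lying in $X_m(p^r)$ is precisely a lift $\bar\gamma=\langle\bar v_0,\dots,\bar v_m\rangle$ of the path $\gamma$ subject to the three constraints $\bar v_0=1/0$, $\bar v_1=0/1$, and $\bar v_m$ equivalent to $1/0$. Since the vertices $1/0$ and $0/1$ of $\mathscr{E}_{p^r}$ reduce to $v_0$ and $v_1$, the first two constraints are compatible with $\bar\gamma$ being a lift, and I would track how each of the three constraints restricts the edge-by-edge freedom.

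First I would fix $\bar v_0=1/0$. The edge $v_0\to v_1=1/0\to 0/1$ has $p^{r-1}$ lifts starting at $\bar v_0$, but the constraint $\bar v_1=0/1$ singles out exactly one of them (the edge $1/0\to 0/1$ is a valid lift, since $1\cdot 1-0\cdot 0=1$). I would then lift the $m-2$ intermediate edges $v_1\to v_2,\dots,v_{m-2}\to v_{m-1}$ freely; by Lemma~\ref{lemma2} this can be done in $p^{(r-1)(m-2)}$ ways, and it determines $\bar v_2,\dots,\bar v_{m-1}$. It remains to lift the final edge $v_{m-1}\to v_m$ subject to $\bar v_m$ being equivalent to $1/0$, and to show that for each choice of $\bar v_{m-1}$ exactly one of its $p^{r-1}$ edge-lifts meets this condition; multiplying would then give the claimed total $p^{(r-1)(m-2)}$.

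The heart of the argument, and the step I expect to be the main obstacle, is this last-edge count. Write $\bar v_{m-1}=\bar c/\bar d$ and $v_{m-1}=c/d$. A vertex equivalent to $1/0$ has the form $\mu/0$ with $\mu$ a unit, so I need to count lifts $\bar c/\bar d\to \mu/0$ of the final edge; the edge relation forces $-\bar d\mu=1$ in $\mathbb{Z}/p^r\mathbb{Z}$. The key observation is that $\bar d$ is a unit: because $v_m$ is equivalent to $1/0$, say $v_m=\lambda/0$, the edge relation $-d\lambda=1$ in $\mathbb{Z}/p\mathbb{Z}$ forces $d$, and hence $\bar d$, to be a unit. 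Thus $\mu=-\bar d^{-1}$ is uniquely determined, and one checks that $\mu/0$ reduces to $v_m$ modulo $p$ and so is a genuine lift of the final edge. This yields exactly one admissible lift of the last edge for each $\bar v_{m-1}$, completing the count.

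The remaining points needing care are routine: the degenerate small case $m=2$, where there are no intermediate edges and the product is empty, giving $1$; and the bookkeeping verification that $1/0$ and $0/1$ serve as the intended canonical lifts of $v_0$ and $v_1$ throughout. I would also record that the per-edge count $p^{r-1}$ used above is exactly the $s=1$ instance of the generalised form of Lemma~\ref{lemma2} noted after its proof.
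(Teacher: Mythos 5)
Your proof is correct and follows essentially the same route as the paper: lift the first $m-1$ edges (with the initial edge pinned to $1/0\to 0/1$) to get $p^{(r-1)(m-2)}$ choices via Lemma~\ref{lemma2}, then observe that the denominator of $\bar v_{m-1}$ is a unit, so the final edge has a unique lift ending at a vertex equivalent to $1/0$, namely $\bar v_{m-1}\to -\bar d^{-1}/0$. The paper's proof is just a terser version of the same argument.
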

\begin{proof}
Let $\langle v_0,v_1,\dots,v_m\rangle$ be a path in $X_m(p)$, where $v_m=\lambda/0$ and $\lambda\neq 0$. By Lemma~\ref{lemma2}, there are $p^{(r-1)(m-2)}$ lifts $\langle 1/0,0/1,\bar{v}_2,\bar{v}_3,\dots,\bar{v}_{m-1} \rangle$  of $\langle v_0,v_1,\dots,v_{m-1} \rangle$ to $\mathscr{E}_{p^r}$. Since $v_{m-1}\to \lambda/0$ is a directed edge in $\mathscr{E}_p$, we see that $\bar{v}_{m-1}$ has the form $a/b$, where $b$ is a unit in $\mathbb{Z}/p^r\mathbb{Z}$. There is then precisely one directed edge  in $\mathscr{E}_{p^r}$ from $\bar{v}_{m-1}$ to a vertex  equivalent to $1/0$, namely $\bar{v}_{m-1}\to -b^{-1}/0$. Hence there are precisely $p^{(r-1)(m-2)}$ lifts, as required.
\end{proof}

Let us now complete the proof of Theorem~A. In the special case when $n$ is a prime $p$ we can apply Theorem~\ref{theorem2} to see that 
\[
|X_m(p)| = \frac{1}{\varphi(p)} \times \frac{(p^{m-1}+(-1)^m)(p-1)}{p+1}=\frac{p^{m-1}+(-1)^m}{p+1}.
\]
When $n$ is a prime power $p^r$ we can apply Lemma~\ref{lemma3} to give
\[
\varphi(p^r)|X_m(p^r)| =p^{r-1}(p-1)\times p^{(r-1)(m-2)}\times |X_m(p)| = \frac{p^{(r-1)(m-1)}(p^{m-1}+(-1)^m)(p-1)}{p+1}. 
\]
This completes the proof of Theorem~\ref{theoremA}.

\section{Proof of Theorem~B}

To prove Theorem~B we use Theorem~1.5 from \cite{ShSoZa2024}, stated below. This theorem uses the notion of a \emph{semiclosed} path in $\mathscr{E}_n$, which is a path with initial vertex $v$ and final vertex $-v$, for any vertex $v$ in $\mathscr{E}_n$.

\begin{theorem}\label{theorem3}
There is a one-to-one correspondence between
\[
\textnormal{SL}_2(\mathbb{Z}/n\mathbb{Z})\Big\backslash\mleft\{\parbox{3.1cm}{\centering\textnormal{semiclosed~paths~of length~$m$~in~$\mathscr{E}_n$}}\mright\}\quad \longleftrightarrow\quad \mleft\{\parbox{3.5cm}{\centering\textnormal{tame~regular~friezes over~$\mathbb{Z}/n\mathbb{Z}$~of~width~$m$}}\mright\}.
\]
\end{theorem}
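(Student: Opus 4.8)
The most economical route is to recognise Theorem~\ref{theorem3} as the specialisation of \cite[Theorem~1.5]{ShSoZa2024} to the ring $\mathbb{Z}/n\mathbb{Z}$ and its Farey graph $\mathscr{E}_n$, exactly as Theorems~\ref{theorem1} and~\ref{theorem2} specialised the general correspondence and enumeration results of that paper. So the plan is to check that the hypotheses of that theorem apply to $\mathscr{E}_n$ and then quote it. It is nevertheless worth sketching the underlying bijection, since the shape of that bijection is what explains the sign appearing in the definition of a semiclosed path.

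The map from paths to friezes runs as follows. A semiclosed path $\langle v_0,v_1,\dots,v_m\rangle$ in $\mathscr{E}_n$ is a sequence of vectors $v_i\in(\mathbb{Z}/n\mathbb{Z})^2$ with $\det(v_i,v_{i+1})=1$ and $v_m=-v_0$. I would extend it to a bi-infinite sequence by means of the two-term recurrence $v_{i+1}=c_iv_i-v_{i-1}$ forced by the edge relations, and then form the array whose entry in row $j$ at position $i$ is the determinant $\det(v_i,v_{i+j})$. Row $0$ vanishes and row $1$ is identically $1$ because consecutive vertices are joined by edges of $\mathscr{E}_n$, while the diamond rule $ad-bc=1$ is an instance of a Pl\"ucker relation among four plane vectors. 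Tameness holds automatically: each entry is a $2\times2$ determinant, so the array factors as a product of matrices of inner dimension $2$ and every $3\times3$ minor vanishes. Finally $v_m=-v_0$ propagates to $v_{i+m}=-v_i$, which makes row $m$ vanish and, after a line of computation, makes row $m-1$ identically $1$; the resulting frieze is therefore regular.

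For the reverse map I would read the diagonal vectors off a regular tame frieze. The essential point is that tameness is exactly what guarantees these vectors are well defined and satisfy the two-term recurrence, so that the frieze is recovered as the array of their pairwise determinants; the two bounding rows of $0$'s together with the adjacent rows of $1$'s translate back into the endpoint relation $v_m=-v_0$. It then remains to track the group action. Since $\textnormal{SL}_2(\mathbb{Z}/n\mathbb{Z})$ preserves every determinant it fixes the associated frieze, and by simple transitivity on directed edges the frieze pins the path down up to this action; hence $\textnormal{SL}_2(\mathbb{Z}/n\mathbb{Z})$-orbits of semiclosed paths correspond bijectively to regular tame friezes.

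The main obstacle, as I see it, is the sign, and with it the absence of any quotient on the frieze side. In the setting of Theorem~\ref{theorem1} the endpoints satisfy only $v_m=\lambda v_0$ for a unit $\lambda$, row $m-1$ then consists of copies of $-\lambda$, and the freedom in $\lambda$ is precisely what produces the $(\mathbb{Z}/n\mathbb{Z})^\times$-quotient on the frieze side. Regularity demands $-\lambda=1$, forcing $\lambda=-1$; this simultaneously singles out the semiclosed paths and removes the scaling freedom, so that semiclosed-path orbits match regular friezes with no residual quotient. Establishing this equivalence rigorously, and verifying that tameness really does force a consistent recurrence over $\mathbb{Z}/n\mathbb{Z}$ and not merely over a field, is exactly what I would be importing from \cite[Theorem~1.5]{ShSoZa2024}.
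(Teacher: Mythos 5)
Your proposal is correct and takes the same route as the paper: the paper gives no independent proof of this statement, simply quoting it as the special case of \cite[Theorem~1.5]{ShSoZa2024} for the ring $\mathbb{Z}/n\mathbb{Z}$ and the Farey graph $\mathscr{E}_n$. Your additional sketch of the underlying bijection (diagonal vectors, determinants, and the role of the sign $v_m=-v_0$ in forcing regularity and eliminating the unit quotient) is consistent with the cited source but goes beyond what the paper itself records.
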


Let $Y_m(n)$ denote the collection of paths in $\mathscr{E}_n$ with initial vertex $1/0$ and final vertex $-1/0$. Then, by Theorem~\ref{theorem3}, the number of tame regular friezes over~$\mathbb{Z}/n\mathbb{Z}$ of width~$m$ is $|Y_m(n)|/n$. Here the factor $n$ arises because we have freedom in choosing the second vertex under $\textnormal{SL}_2(\mathbb{Z}/n\mathbb{Z})$ equivalence (we elect not to specify that the second vertex is $0/1$ as we did for $X_m(n)$). By applying Lemma~\ref{lemma1} we can see that $|Y_m(n)|$ is a multiplicative function of $n$. Consequently, to prove Theorem~\ref{theoremB}, it sufficies to show that $|Y_m(p^r)|/p^r=\Phi_m(p^r)$ (using the notation of that theorem), for each prime power $p^r$. The remainder of this paper is dedicated to that task.

\begin{lemma}\label{lemma4}
Given any pair of vertices $a/b$ and $c/d$ in $\mathscr{E}_{p^r}$, where $b,c\not\equiv 0 \pmod{p}$ and at least one of $a,d\equiv 0\pmod{p}$, there is a unique vertex $v$ in $\mathscr{E}_{p^r}$ for which \(a/b \to v \to c/d\) is a path.
\end{lemma}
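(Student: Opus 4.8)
The plan is to translate the existence and uniqueness of the intermediate vertex into the solvability of a $2\times 2$ linear system over $\mathbb{Z}/p^r\mathbb{Z}$. Writing the sought vertex as $v=x/y$, the requirement that $a/b \to v \to c/d$ be a path amounts, by the edge rule $ad-bc=1$, to the two congruences $ay-bx=1$ and $dx-cy=1$ in $\mathbb{Z}/p^r\mathbb{Z}$. These are linear in the unknowns $x$ and $y$, so I would collect them as
\[
\begin{pmatrix} -b & a \\ d & -c \end{pmatrix}\begin{pmatrix} x \\ y \end{pmatrix} = \begin{pmatrix} 1 \\ 1 \end{pmatrix}.
\]

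The key step is to observe that the coefficient matrix has determinant $bc-ad$, and that the hypotheses force this to be a unit in $\mathbb{Z}/p^r\mathbb{Z}$. Indeed, since $b,c\not\equiv 0\pmod p$ we have $bc\not\equiv 0\pmod p$, while the assumption that at least one of $a,d$ is divisible by $p$ gives $ad\equiv 0\pmod p$; hence $bc-ad\equiv bc\not\equiv 0\pmod p$. A residue of $\mathbb{Z}/p^r\mathbb{Z}$ is a unit precisely when it is nonzero modulo $p$, so the matrix is invertible and the system has exactly one solution $(x,y)$. This already delivers both the existence and the uniqueness of $v$, provided $(x,y)$ genuinely names a vertex.

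The one remaining point, and the only place where a little care is needed, is to check that the solution $(x,y)$ satisfies the vertex condition $\gcd(x,y,p^r)=1$, i.e.\ that $x$ and $y$ are not both divisible by $p$. This follows immediately from the first congruence: were $x\equiv y\equiv 0\pmod p$, then $ay-bx\equiv 0\pmod p$, contradicting $ay-bx=1$. Thus the unique solution is a legitimate vertex of $\mathscr{E}_{p^r}$, which completes the argument.

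I expect the main (indeed essentially the only) obstacle to be the determinant computation together with the verification that the stated hypotheses are exactly what make $bc-ad$ a unit: this is where the asymmetric conditions ``$b,c\not\equiv 0$'' and ``at least one of $a,d\equiv 0$'' are consumed. Geometrically, the out-neighbours of $a/b$ and the in-neighbours of $c/d$ each trace out a line in $(\mathbb{Z}/p^r\mathbb{Z})^2$ with direction $(a,b)$ and $(c,d)$ respectively, and the lemma records that these two lines meet in a single vertex exactly when those directions are independent modulo $p$; the hypotheses guarantee this independence.
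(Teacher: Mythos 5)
Your proof is correct and follows essentially the same route as the paper: both reduce the problem to the linear system $ay-bx=1$, $dx-cy=1$ and observe that the hypotheses make the determinant $ad-bc$ (equivalently $bc-ad$) a unit modulo $p^r$, yielding a unique solution. The only differences are cosmetic — the paper writes down the explicit solution $x\equiv\mu(a+c)$, $y\equiv\mu(b+d)$ with $\mu=(ad-bc)^{-1}$, while you argue via invertibility of the coefficient matrix and add the (correct, and in the paper implicit) check that the solution satisfies the vertex condition $\gcd(x,y,p^r)=1$.
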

\begin{proof}
There is a path $a/b\to x/y\to c/d$ in $\mathscr{E}_{p^r}$ if and only if 
\[
ay-bx \equiv 1 \pmod{p^r}\quad\text{and}\quad
dx-cy \equiv 1 \pmod{p^r}.
\]
Since $b,c\not\equiv 0 \pmod{p}$ and one of $a,d\equiv 0\pmod{p}$ it follows that $ad-bc$ has a multiplicative inverse $\mu$ modulo $p^r$. With this observation, we can see that there is a unique solution to the pair of congruences, namely $x\equiv \mu(a+c) \pmod{p^r}$ and  $y\equiv \mu(b+d) \pmod{p^r}$, as required.
\end{proof}

A \emph{subpath} of a path $\langle v_0,v_1,\dots,v_m\rangle$ is a path  $\langle v_i,v_{i+1},\dots,v_j\rangle$, where $0\leq i<j\leq m$. We write $\ast$ for some unspecified vertex of whatever graph we are working with.

\begin{lemma}\label{lemma5}
Let $\gamma$ be a path in $Y_m(p^{s})$ that has a subpath of the form \(a/b \to \ast \to c/d\), where $b,c\not\equiv 0 \pmod{p}$ and one of $a,d\equiv 0\pmod{p}$. Then, for $r>s$, there are precisely $p^{(r-s)(m-2)}$ lifts of $\gamma$ to $Y_m(p^r)$.
\end{lemma}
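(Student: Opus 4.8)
The plan is to use the distinguished subpath as a pivot that decouples the two endpoint conditions built into $Y_m(p^r)$. Write $\gamma=\langle v_0,v_1,\dots,v_m\rangle$, so that $v_0=1/0$ and $v_m=-1/0$, and suppose the distinguished subpath is $\langle v_{i-1},v_i,v_{i+1}\rangle$ with $v_{i-1}=a/b$ and $v_{i+1}=c/d$. Writing $\pi\colon\mathscr{E}_{p^r}\to\mathscr{E}_{p^s}$ for the reduction homomorphism (the composite of the maps $\theta$), a lift $\bar\gamma=\langle\bar v_0,\dots,\bar v_m\rangle$ lies in $Y_m(p^r)$ precisely when $\bar v_0=1/0$, $\bar v_m=-1/0$, and $\pi(\bar v_j)=v_j$ for all $j$. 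I would construct every such $\bar\gamma$ from three independent pieces: a lift of the left portion $\langle v_0,\dots,v_{i-1}\rangle$, a lift of the right portion $\langle v_{i+1},\dots,v_m\rangle$, and the pivot vertex $\bar v_i$.

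First I would count lifts of the two flanking portions. The left portion is a path of length $i-1$ whose initial vertex $v_0=1/0$ has the prescribed lift $1/0$, so by the multi-stage form of Lemma~\ref{lemma2} it has exactly $p^{(r-s)(i-1)}$ lifts, each producing a lift $\bar v_{i-1}$ of $a/b$. Symmetrically, applying Lemma~\ref{lemma2} in its reverse form to the right portion---a path of length $m-i-1$ whose final vertex $v_m=-1/0$ has the prescribed lift $-1/0$---gives exactly $p^{(r-s)(m-i-1)}$ lifts, each producing a lift $\bar v_{i+1}$ of $c/d$. These choices are made independently.

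The pivot vertex is then forced. Since $\pi$ preserves residues modulo $p$, any lift $\bar v_{i-1}=a'/b'$ of $a/b$ has $b'\not\equiv 0$, any lift $\bar v_{i+1}=c'/d'$ of $c/d$ has $c'\not\equiv 0$, and one of $a',d'\equiv 0\pmod p$; thus $\bar v_{i-1}$ and $\bar v_{i+1}$ satisfy the hypotheses of Lemma~\ref{lemma4} in $\mathscr{E}_{p^r}$, so there is a \emph{unique} vertex $\bar v_i$ with $\bar v_{i-1}\to\bar v_i\to\bar v_{i+1}$. Applying the homomorphism $\pi$ yields a path $a/b\to\pi(\bar v_i)\to c/d$ in $\mathscr{E}_{p^s}$, and since $\langle v_{i-1},v_i,v_{i+1}\rangle$ is itself such a path, the uniqueness clause of Lemma~\ref{lemma4} in $\mathscr{E}_{p^s}$ forces $\pi(\bar v_i)=v_i$. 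Hence $\bar v_i$ is automatically a valid lift of $v_i$.

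Assembling the three pieces yields a bijection between the lifts $\bar\gamma\in Y_m(p^r)$ of $\gamma$ and the pairs consisting of a left lift and a right lift: each such pair determines a unique pivot and hence a unique $\bar\gamma$, while conversely every $\bar\gamma$ restricts to such a pair whose pivot is necessarily the one supplied by Lemma~\ref{lemma4}. The count is therefore $p^{(r-s)(i-1)}\cdot p^{(r-s)(m-i-1)}=p^{(r-s)(m-2)}$, as claimed. I expect the point requiring the most care to be the verification that the uniquely determined pivot is simultaneously compatible with \emph{any} independent choice of the two flanking lifts and descends correctly to $v_i$; this is exactly where Lemma~\ref{lemma4} does the work of absorbing the otherwise awkward constraint of fixing both endpoints of $\gamma$ at once.
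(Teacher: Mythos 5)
Your proposal is correct and follows essentially the same route as the paper: lift the two flanking portions independently via Lemma~\ref{lemma2} (in normal and reverse form, giving $p^{(r-s)(i-1)}\cdot p^{(r-s)(m-i-1)}=p^{(r-s)(m-2)}$ choices), then observe that Lemma~\ref{lemma4} supplies a unique pivot vertex completing each choice to a lift of $\gamma$. Your explicit check that the pivot descends to $v_i$ (via uniqueness of the pivot downstairs) is a point the paper passes over with ``this path must be a lift of $\gamma$,'' but it is the same argument.
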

\begin{proof}
Let $\gamma=\langle v_0,v_1,\dots,v_m\rangle\in Y_m(p^s)$. We can find an index $j$ with $v_{j-1}=a/b$ and $v_{j+1}=c/d$. By applying Lemma~\ref{lemma2}, in its normal form and in reverse form, we can find exactly $p^{(r-s)(m-2)}$ choices of vertices $\bar{v}_1,\bar{v}_2,\dots,\bar{v}_{j-1},\bar{v}_{j+1},\dots,\bar{v}_{m-1}$ in $\mathscr{E}_{p^r}$ such that $\langle \bar{v}_0,\bar{v}_1,\dots,\bar{v}_{j-1}\rangle$ is a lift of $\langle v_0,v_1,\dots,v_{j-1}\rangle$ and $\langle \bar{v}_{j+1},\bar{v}_{j+2},\dots,\bar{v}_{m}\rangle$ is a lift of $\langle v_{j+1},v_{j+2},\dots,v_{m}\rangle$ (where $\bar{v}_0=1/0$ and $\bar{v}_m=-1/0$). For any one of these choices of $m-2$ vertices, there is, by Lemma~\ref{lemma4}, a unique vertex $\bar{v}_j$ such that $\langle \bar{v}_0,\bar{v}_1,\dots,\bar{v}_{m}\rangle$ is a path -- and this path must be a lift of $\gamma$. Hence there are $p^{(r-s)(m-2)}$ lifts of $\gamma$, as required.
\end{proof}

The next lemma gives values of $m$ and $p$ for which \emph{all} paths in $Y_m(p)$ are of the type considered in Lemma~\ref{lemma5}.

\begin{lemma}\label{lemma6}
Suppose that either $m$ is odd or $m\equiv 0\pmod{4}$ and $p\neq 2$. Then any path $\gamma\in Y_m(p)$ has a subpath of one of the forms
\[
\frac{a}{b} \to \frac{c}{1}\to \frac{-1}{0}\quad\text{or}\quad\frac{a}{b} \to \frac{c}{-1}\to \frac{1}{0},
\]
where $a,b,c\in \mathbb{Z}/p\mathbb{Z}$ and $b\neq 0$. 
\end{lemma}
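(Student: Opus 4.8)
The plan is to track, along a path $\gamma=\langle v_0,\dots,v_m\rangle\in Y_m(p)$, the vertices that are \emph{points at infinity}, meaning vertices $\lambda/0$ with $\lambda$ a unit; these are exactly the vertices whose denominator is $0$. Writing $v_i=a_i/b_i$, the edge condition is $a_ib_{i+1}-b_ia_{i+1}=1$. First I would record two elementary consequences. If $v_j=-1/0$ then the edge $v_{j-1}\to v_j$ forces $b_{j-1}=1$, and if $v_j=1/0$ it forces $b_{j-1}=-1$; in particular the middle vertex of any subpath ending at $-1/0$ automatically has the form $c/1$, and of any subpath ending at $1/0$ automatically has the form $c/-1$. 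This reduces the two target forms to a single clean statement: it suffices to find an index $j$ with $2\leq j\leq m$ such that $v_j\in\{1/0,-1/0\}$ and $v_{j-2}$ is \emph{not} a point at infinity, since then $v_{j-2}\to v_{j-1}\to v_j$ has exactly one of the two required shapes (with $b\neq0$ being the statement that $v_{j-2}$ has nonzero denominator).

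The second ingredient is an alternation property: if $v_i$ and $v_{i-2}$ are both points at infinity, then chasing the two edge relations through $v_{i-1}$ shows $v_{i-2}=-v_i$. Starting from $v_m=-1/0$ and stepping backwards two indices at a time, I would let $k$ be maximal such that $v_m,v_{m-2},\dots,v_{m-2k}$ are all points at infinity; the alternation property then gives $v_{m-2k}=(-1)^{k+1}/0$. By maximality, either $v_{m-2k-2}$ has index $\geq0$ and is not a point at infinity, in which case $j=m-2k\geq2$ already supplies the desired subpath, or the backward chain reaches index $0$ or index $1$.

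It remains to rule out the chain running off the end, and this is exactly where the hypotheses enter. If $m$ is odd, the chain indices are odd, so index $0$ is unreachable on parity grounds, while reaching index $1$ would put $v_1$ in the chain; but $v_1=a_1/1$ is not a point at infinity, so this is impossible. Hence $m-2k\geq3$ and $j=m-2k$ works. If $m\equiv0\pmod4$ and $p\neq2$, the chain indices are even, so only index $0$ could be reached, at $k=m/2$; but then $k$ is even, whence $v_0=(-1)^{k+1}/0=-1/0$, contradicting $v_0=1/0$ because $-1\neq1$ in $\mathbb{Z}/p\mathbb{Z}$. So the chain again breaks at an interior even index and $j=m-2k\geq2$ works.

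The main obstacle is the parity bookkeeping in the backward chain, and in particular making transparent why the excluded regimes genuinely behave differently rather than merely resisting this argument. The same computation shows that when $m\equiv2\pmod4$ the forced value at $v_0$ is $+1/0$, which is \emph{consistent} with $v_0=1/0$, so a path may alternate between points at infinity all the way back and carry no subpath of the required form; and when $p=2$ the vertices $1/0$ and $-1/0$ coincide, so the contradiction driving the $m\equiv0\pmod4$ case evaporates. I would therefore present the alternation and parity steps cleanly enough that it is clear both that the stated hypotheses suffice and that they cannot be dropped, leaving the regimes $m\equiv2\pmod4$, and $p=2$ with $m\equiv0\pmod4$, to be handled separately.
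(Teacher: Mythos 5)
Your proposal is correct and follows essentially the same route as the paper: both arguments walk backwards from $v_m=-1/0$ through the forced alternation $-1/0,\,1/0,\,-1/0,\dots$ at even steps until the chain breaks at a vertex with nonzero denominator, and both terminate the chain using $v_1=\lambda/1$ when $m$ is odd and $v_0=1/0\neq-1/0$ (requiring $p\neq2$) when $m\equiv0\pmod4$. Your added remarks on why the excluded regimes genuinely fail correspond to the paper's subsequent introduction of the exceptional set $\Omega_m(p)$.
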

\begin{proof}
Let $\gamma=\langle v_0,v_1,\dots,v_m\rangle\in Y_m(p)$, and let us write $v_{m-2}=a/b$. If $b\neq 0$, then the final three vertices of $\gamma$ give a subpath of the required type. Suppose instead that $b=0$; then $a=1$. In this case the subpath $v_{m-4}\to v_{m-3}\to v_{m-2}$ has the form
\[
\frac{a'}{b'} \to \frac{c'}{-1}\to \frac{1}{0}.
\]
If $b'\neq 0$, then we have a subpath of the required type. Suppose instead that $b'=0$; then $a'=-1$. We can now repeat this argument, working backwards four edges at a time. If $m$ is odd, then this process must yield a subpath of the required type because $v_1=\lambda/1$, for $\lambda\neq 0$. The other possibility is that $m\equiv 0\pmod{4}$ and $p\neq 2$, and in this case the process must also yield a subpath of the required type because $v_0=1/0\neq -1/0$.
\end{proof}

Let $\Omega_m(p)$ be the collection of paths of even length $m=2k$ in $\mathscr{E}_p$ of the form
\[
\frac{1}{0} \to \frac{\lambda_1}{1} \to \frac{-1}{0} \to \frac{\lambda_2}{-1} \to\dotsb \to \frac{\varepsilon}{0}.
\]
The final vertex is $\varepsilon/0$, where $\varepsilon$ is $1$ if $k$ is even and $-1$ if $k$ is odd. For $m\equiv 2\pmod{4}$ (or $m\equiv 0\pmod{4}$ and $p=2$), the collection $\Omega_m(p)$ comprises those paths in $Y_m(p)$ \emph{not} of the type considered in Lemma~\ref{lemma6}. Counting the lifts of these paths to $Y_m(p^r)$ is the more challenging task that we now tackle.

For $1\leq t<r$, let $Z_k(r,t)$ denote the set of those lifts to $\mathscr{E}_{p^r}$ of paths from $\Omega_{m}(p)$ with initial vertex $1/0$ and final vertex of the form $(\varepsilon+a)/b$, where $a,b\equiv0\pmod{p}$, $\nu_p(b)=t$, and $\nu_p(a)\geq \nu_p(b)$ (and $\nu_p(0)$ is $\infty$). Let $Z_k(r)$ denote the set of those lifts to $\mathscr{E}_{p^r}$ of paths from $\Omega_{m}(p)$ with initial vertex $1/0$ and final vertex $\varepsilon/0$. We aim to count $Z_k(r)$.

\begin{lemma}\label{lemma7}
Suppose that  $a,b\equiv0\pmod{p}$ and $b\not\equiv 0\pmod{p^r}$. Let $s=\nu_p(a)$ and $t=\nu_p(b)$. Then the number of paths in $\mathscr{E}_{p^r}$ of the form
\[
\frac{-\varepsilon+a}{b} \to \ast \to \frac{\varepsilon}{0}
\]
 is zero if $s<t$ and $p^t$ if $s\geq t$.
\end{lemma}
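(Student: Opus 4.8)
The plan is to turn the existence of the two-edge path into a pair of congruences via the edge rule of $\mathscr{E}_{p^r}$, and then reduce the whole count to that of solutions of a single linear congruence modulo $p^r$. Write the unspecified middle vertex as $\ast = x/y$. By the definition of $\mathscr{E}_{p^r}$ (an edge $u/v\to w/z$ exists iff $uz-vw\equiv 1$), the path $\frac{-\varepsilon+a}{b}\to x/y\to \frac{\varepsilon}{0}$ exists precisely when
\[
(-\varepsilon+a)y - bx \equiv 1 \pmod{p^r}\qquad\text{and}\qquad -\varepsilon y \equiv 1 \pmod{p^r}.
\]

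First I would extract information from the second congruence. Since $\varepsilon=\pm1$ we have $\varepsilon^2=1$, so $-\varepsilon y\equiv 1$ forces $y\equiv -\varepsilon\pmod{p^r}$; the second coordinate of $\ast$ is thus completely pinned down. Because $y$ is then a unit, the vertex condition $\gcd(x,y,p^r)=1$ holds automatically, so every admissible value of $x$ does yield a genuine vertex of $\mathscr{E}_{p^r}$, and distinct values of $x$ give distinct vertices. Counting the paths therefore reduces to counting the admissible values of $x$.

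Next I would substitute $y=-\varepsilon$ into the first congruence. Using $\varepsilon^2=1$ once more, the term $(-\varepsilon+a)(-\varepsilon)=1-\varepsilon a$ cancels the $1$ on the right-hand side, and the congruence collapses to
\[
bx \equiv -\varepsilon a \pmod{p^r}.
\]
Now I would invoke the standard theory of linear congruences. Set $g=\gcd(b,p^r)$; since $b\not\equiv 0\pmod{p^r}$ we have $t=\nu_p(b)<r$ and hence $g=p^{t}$. The congruence $bx\equiv -\varepsilon a\pmod{p^r}$ is solvable iff $g$ divides its right-hand side, i.e.\ iff $p^{t}\mid a$ (as $\varepsilon$ is a unit), which is exactly the condition $s\geq t$; when solvable it has precisely $g=p^{t}$ solutions, and otherwise none. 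This gives $0$ paths when $s<t$ and $p^{t}$ paths when $s\geq t$, as required.

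The argument is almost entirely mechanical, so there is no serious obstacle; the only points requiring care are the bookkeeping of the two edge orientations (which vertex plays the role of $(a,b)$ versus $(c,d)$ in the rule $ad-bc\equiv 1$) and the repeated use of $\varepsilon^2=1$ to collapse the first congruence. Once the problem is reduced to the single congruence $bx\equiv-\varepsilon a$, the $s<t$ versus $s\geq t$ dichotomy follows immediately from the solvability criterion $\gcd(b,p^r)\mid -\varepsilon a$.
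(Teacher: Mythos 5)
Your proof is correct and follows essentially the same route as the paper: both reduce the two edge conditions to the observation that the middle vertex must be $x/(-\varepsilon)$ and then to the single congruence $bx\equiv-\varepsilon a\pmod{p^r}$, whose solvability and solution count give the dichotomy. Your write-up just spells out the substitution and the standard linear-congruence criterion a little more explicitly than the paper does.
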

\begin{proof}
There is a path of the given type if and only if the middle vertex has the form $x/(-\varepsilon)$ and \(bx \equiv -\varepsilon a \pmod{p^r}\). This final congruence has solutions if and only if $s\geq t$, and if $s\geq t$ then there are $p^t$ solutions given by $x \equiv -\varepsilon (a/p^t)(b/p^t)^{-1} \pmod{p^{r-t}}$.
\end{proof}

Consider the path $\gamma'$ obtained by removing the final two vertices from a path $\gamma\in Z_k(r)$, where $k>1$. The final vertex of $\gamma'$ has the form $(-\varepsilon+a)/b$, where $a,b\equiv 0\pmod{p}$. An elementary calculation shows that if $b\equiv 0\pmod{p^r}$, then $a\equiv 0\pmod{p^r}$ and there are $p^r$ paths of the form $-\varepsilon/0 \to \ast \to \varepsilon/0$. In this case $\gamma'\in Z_{k-1}(r)$. Alternatively, if $b\not\equiv 0\pmod{p^r}$, then Lemma~\ref{lemma7} tells us that $\nu_p(a)\geq \nu_p(b)$. In this case $\gamma'\in Z_{k-1}(r,t)$, where $t=\nu_p(b)$. Applying Lemma~\ref{lemma7} again we see that  
\begin{equation}\label{eqn2}
|Z_k(r)| = p^r|Z_{k-1}(r)|+\sum_{t=1}^{r-1} p^{t}|Z_{k-1}(r,t)|. 
\end{equation}

\begin{lemma}\label{lemma9}
For $k\geq 1$ and $r>1$ we have 
\begin{enumerate}
\item $|Z_{k}(r,t)| =p^{2k}|Z_{k}(r-1,t)|$, for $1\leq t<r-1$,
\item $|Z_{k}(r,r-1)| =p^{2k-1}(p-1)|Z_{k}(r-1)|$.
\end{enumerate} 
\end{lemma}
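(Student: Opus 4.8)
The plan is to prove both identities by relating lifts to $\mathscr{E}_{p^r}$ with lifts to $\mathscr{E}_{p^{r-1}}$ through the one-stage projection $\theta\colon\mathscr{E}_{p^r}\to\mathscr{E}_{p^{r-1}}$. The idea is that each path counted on the left-hand side projects under $\theta$ to a path counted on the right-hand side, so that once $\theta$ is shown to carry $Z_k(r,t)$ into $Z_k(r-1,t)$ (resp.\ $Z_k(r,r-1)$ into $Z_k(r-1)$), it remains only to determine the fibre sizes. Since every element of these sets is a lift of a path from $\Omega_m(p)$ with initial vertex $1/0$, and $\theta$ fixes $1/0$, the image is again such a lift; the whole argument then hinges on tracking the $p$-adic valuations of the coordinates $a,b$ of the final vertex $(\varepsilon+a)/b$ under reduction modulo $p^{r-1}$.

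First I would verify well-definedness. For part~(i) the hypothesis $t<r-1$ guarantees that reduction modulo $p^{r-1}$ leaves $\nu_p(b)=t$ intact and preserves the inequality $\nu_p(a)\geq\nu_p(b)$, so $\theta$ maps $Z_k(r,t)$ into $Z_k(r-1,t)$; for part~(ii) the equality $t=r-1$ forces the reduced final vertex to collapse to exactly $\varepsilon/0$, so $\theta$ maps $Z_k(r,r-1)$ into $Z_k(r-1)$. For part~(i) I then claim the fibre over each $\delta\in Z_k(r-1,t)$ consists of precisely the $p^{2k}$ lifts of $\delta$ with initial vertex $1/0$ guaranteed by Lemma~\ref{lemma2}. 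It suffices to show that \emph{every} such lift already lies in $Z_k(r,t)$: writing the lifted final vertex as $(\varepsilon+a')/b'$ with $a'\equiv a$ and $b'\equiv b\pmod{p^{r-1}}$, the condition $t<r-1$ ensures $\nu_p(b')=t$ and $\nu_p(a')\geq t$ automatically, since the correction terms have valuation at least $r-1>t$. This yields $|Z_k(r,t)|=p^{2k}|Z_k(r-1,t)|$.

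Part~(ii) is the delicate case and I expect it to be the main obstacle. Here the reduced final vertex is $\varepsilon/0$, and among the $p^{2k}$ lifts of a given $\delta\in Z_k(r-1)$ I must count only those whose final vertex has second coordinate of valuation exactly $r-1$, discarding those whose second coordinate vanishes modulo $p^r$. A naive equidistribution argument over the $p^2$ lifts of $\varepsilon/0$ does not succeed, so instead I would count the discarded lifts directly. The crucial structural feature is that in every path of $\Omega_m(p)$ the second-to-last vertex $v_{2k-1}$ has second coordinate $(-1)^{k-1}$, a unit; hence any lift $\bar v_{2k-1}=c'/d'$ has $d'$ invertible, and the edge equation forces a \emph{unique} adjacent vertex with vanishing second coordinate, namely $-d'^{-1}/0$, which one checks reduces correctly to $\varepsilon/0$ (using $-(-1)^{-(k-1)}=(-1)^k=\varepsilon$).

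Truncating the final edge therefore sets up a bijection between the discarded lifts and the $p^{2k-1}$ lifts of the shortened path $\langle v_0,\dots,v_{2k-1}\rangle$ with initial vertex $1/0$ given by Lemma~\ref{lemma2}. Subtracting these from the total gives $p^{2k}-p^{2k-1}=p^{2k-1}(p-1)$ lifts in $Z_k(r,r-1)$ over each $\delta\in Z_k(r-1)$, and hence $|Z_k(r,r-1)|=p^{2k-1}(p-1)|Z_k(r-1)|$. The most likely pitfall in writing this up cleanly is keeping the valuation bookkeeping in part~(i) and the unit/uniqueness argument in part~(ii) fully rigorous at the boundary cases $\bar a=0$ and $k=1$, but both are handled by the remarks above.
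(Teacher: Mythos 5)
Your proof is correct and follows essentially the same route as the paper: both parts reduce to counting, via Lemma~\ref{lemma2}, the $p^{2k}$ lifts over each path one level down, with part~(i) immediate from $t<r-1$ and part~(ii) resting on the observation that the penultimate lifted vertex $c'/d'$ has $d'$ a unit, which pins down the final vertex. The only cosmetic difference is that you count the admissible lifts in~(ii) as $p^{2k}-p^{2k-1}$ by excising the unique ``bad'' extension $-d'^{-1}/0$, whereas the paper counts $p^{2k-1}\cdot(p-1)$ directly by noting that $a$ is determined by $b$ and requiring $b\neq 0$.
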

\begin{proof}
First we prove (i). Let $\gamma\in Z_k(r-1,t)$. Since $\gamma$ has length $2k$, we see from Lemma~\ref{lemma2} that there are precisely $p^{2k}$ lifts of $\gamma$ to $\mathscr{E}_{p^r}$ with initial vertex $1/0$. The condition $1\leq t<r-1$ ensures that each lift belongs to $Z_k(r,t)$. Hence $|Z_{k}(r,t)| =p^{2k}|Z_{k}(r-1,t)|$.

Next we prove (ii). Let $\gamma\in Z_k(r-1)$. There are $p^{2k}$ lifts of $\gamma'$ to $\mathscr{E}_{p^r}$ with initial vertex $1/0$. The final vertex of any lift has the form $(\varepsilon+ap^{r-1})/(bp^{r-1})$, where $a,b\in\{0,1,\dots,p-1\}$. One can check from the final edge that $a$ is uniquely specified by $b$. Now, this lift lies in $Z_k(r,r-1)$ if and only if $b\neq 0$ -- so there are $p^{2k-1}$ lifts of the first $2k$ vertices of $\gamma$ and $p-1$ suitable lifts of the last vertex. Hence $|Z_{k}(r,r-1)| =p^{2k-1}(p-1)|Z_{k}(r-1)|$. 
\end{proof}

From Lemma~\ref{lemma9} we have, for $k\geq 1$ and $1\leq t<r$,
\[
|Z_{k}(r,t)| =p^{2k(r-t-1)}|Z_{k}(t+1,t)| = p^{2k(r-t)-1}(p-1)|Z_{k}(t)|.
\]
Substituting this into \eqref{eqn2} gives
\[
|Z_k(r)| = p^r|Z_{k-1}(r)|+(p-1)p^{2r(k-1)-1}\sum_{t=1}^{r-1} p^{(3-2k)t}|Z_{k-1}(t)|. 
\]
One can then prove by induction (a task expedited with computer algebra software) that
\begin{equation}\label{eqn3}
|Z_k(r)|=p^{(r-1)(2k-2)+1}\mleft((p-1)[r-1]_{p^{2-k}}+p^{k-1}\mright),
\end{equation}
where the initial case $|Z_1(r)|=p^r$ is easily verified.



The set $Z_k(r)$ comprises lifts to $\mathscr{E}_{p^r}$ of paths from $\Omega_{2k}(p)$ with initial vertex $1/0$ and final vertex $1/0$ ($k$ even) or $-1/0$ ($k$ odd). It remains to count the set $W_k(r)$ of lifts to $\mathscr{E}_{p^r}$ of paths from $\Omega_{2k}(p)$ that have initial vertex $1/0$ and final vertex $-1/0$ when $k$ is even. This set is empty unless $p=2$.

\begin{lemma}\label{lemma10}
For $k$ even and $r>1$, 
\(
|W_k(r)|=2^{(r-2)(2k-2)}2^{2k-1}(2^{k-1}-1).
\)
\end{lemma}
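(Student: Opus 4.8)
The plan is to evaluate $|W_k(r)|$ by summing, over each path $\gamma\in\Omega_{2k}(2)$, the number $N(\gamma)$ of lifts of $\gamma$ to $\mathscr{E}_{2^r}$ with initial vertex $1/0$ and final vertex $-1/0$. Such a $\gamma$ is encoded by its sequence $(\lambda_1,\dots,\lambda_k)\in\{0,1\}^k$, and the heart of the argument is the claim that $N(\gamma)=2^{(r-1)(2k-2)}$ when this sequence is non-constant and $N(\gamma)=0$ when it is constant. Granting this, there are $2^k-2$ non-constant sequences, whence
\[
|W_k(r)|=(2^k-2)\,2^{(r-1)(2k-2)}=2^{(r-1)(2k-2)+1}(2^{k-1}-1)=2^{(r-2)(2k-2)}2^{2k-1}(2^{k-1}-1),
\]
which is exactly the asserted value (the last equality is the identity $(r-1)(2k-2)+1=(r-2)(2k-2)+2k-1$ on exponents).

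For the non-constant sequences I would argue as follows. If $\lambda_i=0$ and $\lambda_{i+1}=1$ for some $i$, then $\gamma$ contains a subpath $0/\pm1\to\pm1/0\to1/\pm1$; in the notation $a/b\to\ast\to c/d$ of Lemma~\ref{lemma5} we have $b=\pm1$ and $c=1$ nonzero modulo $2$ while $a=0$, so Lemma~\ref{lemma5} (with $s=1$) gives $N(\gamma)=2^{(r-1)(2k-2)}$ at once. The remaining non-constant case is $\gamma$ of the form $1^a0^b$ with $1\le a\le k-1$, whose only adjacency change is $\lambda_a=1$, $\lambda_{a+1}=0$; the resulting subpath $1/\pm1\to\pm1/0\to0/\pm1$ has last first-coordinate $c=0$ and so is \emph{not} admissible for Lemma~\ref{lemma5}. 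The device is to apply $S=\bigl(\begin{smallmatrix}0&-1\\1&0\end{smallmatrix}\bigr)\in\textnormal{SL}_2(\mathbb{Z}/2^r\mathbb{Z})$: the image $S\gamma$ contains $\mp1/1\to0/\pm1\to\mp1/0$, for which $b=1$, $c=\mp1$ and $d=0$, so this subpath \emph{is} admissible. By equivariance, $N(\gamma)$ equals the number of lifts of $S\gamma$ with initial vertex $0/1$ and final vertex $S(-1/0)=0/-1$. Since the proof of Lemma~\ref{lemma5} only uses that the two endpoints are fixed and never the specific values $1/0,-1/0$, it delivers $2^{(r-1)(2k-2)}$ lifts for this endpoint pair as well. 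This is the step I expect to be the main obstacle, as it relies both on the $S$-conjugation and on the (routine but necessary) generalisation of Lemma~\ref{lemma5} to arbitrary prescribed lift-endpoints.

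It then remains to show $N(\gamma)=0$ for the two constant sequences $0^k$ and $1^k$. For $\gamma=0^k$, the consecutive determinant conditions force $v_{i+1}=q_iv_i-v_{i-1}$ for unique $q_i\in\mathbb{Z}/2^r\mathbb{Z}$; modulo $2$ the vertices alternate between $(1,0)$ and $(0,1)$, so $v_{i+1}\equiv v_{i-1}$ and hence every $q_i$ is even. A short induction modulo $4$ now yields $v_{2j}\equiv\bigl((-1)^j,\ \textnormal{even}\bigr)$ and $v_{2j+1}\equiv\bigl(\textnormal{even},\ (-1)^j\bigr)\pmod 4$. In particular $v_{2k}$ has first coordinate $\equiv(-1)^k\equiv 1\pmod4$ because $k$ is even, whereas $-1/0=(-1,0)$ has first coordinate $\equiv 3\pmod 4$ (as $r\ge 2$); thus $v_{2k}\ne -1/0$ and $N(0^k)=0$. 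Finally $T=\bigl(\begin{smallmatrix}1&1\\0&1\end{smallmatrix}\bigr)$ fixes both $1/0$ and $-1/0$ and carries the path $0^k$ to the path $1^k$, so by equivariance $N(1^k)=N(0^k)=0$.

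Combining the three cases gives the displayed value of $|W_k(r)|$, completing the proof. The only genuinely delicate points are the uniform contribution of the paths $1^a0^b$ (handled by the $S$-conjugation together with the endpoint-agnostic form of Lemma~\ref{lemma5}) and the vanishing for the two constant paths (a mod-$4$ computation, transferred between $0^k$ and $1^k$ by $T$); the paths containing a `$01$' are immediate from Lemma~\ref{lemma5}.
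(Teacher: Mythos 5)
Your proof is correct, but it takes a genuinely different route from the paper's. The paper proceeds in two stages: it first computes $|W_k(2)|=2^{2k-1}(2^{k-1}-1)$ by a transfer-matrix count of weighted paths on the four lifts $\{1/0,-1/0,1/2,-1/2\}$ of $1/0$ in $\mathscr{E}_4$, and then observes that every path in $W_k(2)$ must traverse both a ``vertical'' and a ``diagonal'' edge of that weighted graph, which supplies a subpath admissible for Lemma~\ref{lemma5} and hence a uniform factor $2^{(r-2)(2k-2)}$ when lifting from $\mathscr{E}_4$ to $\mathscr{E}_{2^r}$. You instead work directly from $\mathscr{E}_2$, stratifying $\Omega_{2k}(2)$ by the binary word $(\lambda_1,\dots,\lambda_k)$ and showing that each of the $2^k-2$ non-constant words contributes exactly $2^{(r-1)(2k-2)}$ lifts while the two constant words contribute none. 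I checked the arithmetic (the two closed forms agree, and your count specialises correctly to the paper's value at $r=2$) and the three sub-arguments: the direct application of Lemma~\ref{lemma5} to words containing a consecutive ``01''; the $S$-conjugation for the words $1^a0^b$, which does rely on the unstated but routine generalisation of Lemma~\ref{lemma5} to arbitrary prescribed endpoint lifts (legitimate, since its proof uses only Lemmas~\ref{lemma2} and~\ref{lemma4}, neither of which cares about the specific endpoints); and the mod-$4$ recurrence $v_{i+1}=q_iv_i-v_{i-1}$ with $q_i$ even, which forces the final vertex of any lift of $0^k$ to have first coordinate $\equiv 1\pmod 4$ and is transported to $1^k$ by $T$. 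All are sound. Your version buys a more refined statement --- it identifies exactly which base paths contribute and explains the factor $2^{k-1}-1$ as $(2^k-2)/2$ --- and avoids the omitted matrix-power computation; the paper's version avoids the case analysis and uses Lemma~\ref{lemma5} exactly as stated.
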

\begin{proof}
Suppose that $r=2$. The vertex $1/0$ from $\mathscr{E}_2$ lifts to the set $V=\{1/0,-1/0,1/2,-1/2\}$ in $\mathscr{E}_4$, so all the even-index vertices of a path from $W_k(2)$ belong to $V$. To count $W_k(2)$, it is equivalent to count the number of paths of length $k$ from $1/0$ to $-1/0$ in the weighted graph $G$ with vertices $V$ and with weight for the edge between vertices $u$ and $v$ given by the number of paths of length 2 in $\mathscr{E}_4$ of the form $u\to \ast \to v$ (which is the same as the number of paths $v\to \ast \to u$). This graph is illustrated in Figure~\ref{fig2} alongside the adjacency matrix of the graph. Horizontal edges of the graph have weight 4 and vertical and diagonal edges have weight 2.
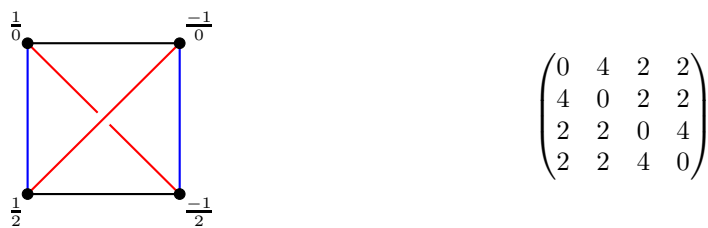
\begin{figure}[ht]
\centering
\begin{subfigure}[b]{0.5\textwidth}	
 \centering
\begin{tikzpicture}[scale=1,font=\small,inner sep=0.4mm]
	\coordinate (A) at (-1,1);
	\coordinate (B) at (1,1);
	\coordinate (C) at (1,-1);
	\coordinate (D) at (-1,-1);
		
	\draw[thick] (A)--(B);
	\draw[thick,red] (A)--(C);
	\filldraw[white] (0,0) circle (3pt);
	\draw[thick,blue] (A)--(D);
	\draw[thick,blue] (B)--(C);
	\draw[thick,red] (B)--(D);
	\draw[thick] (C)--(D);

	\filldraw[black] (A) circle (2pt) node[anchor=south east]{$\tfrac10$};
	\filldraw[black] (B) circle (2pt) node[anchor=south west]{$\tfrac{-1}0$};
	\filldraw[black] (C) circle (2pt) node[anchor=north west]{$\tfrac{-1}2$};
	\filldraw[black] (D) circle (2pt) node[anchor=north east]{$\tfrac12$};		
\end{tikzpicture}
\end{subfigure}
\begin{subfigure}[t]{0.4\textwidth}
 \centering
\raisebox{14mm}{\(
\begin{pmatrix}
0 & 4 & 2 & 2\\
4 & 0 & 2 & 2\\
2 & 2 & 0 & 4\\
2 & 2 & 4 & 0
\end{pmatrix}
\)}
\end{subfigure}
\caption{Graph $G$ (left) and its adjacency matrix (right)}
\label{fig2}
\end{figure}        

By taking the $k$th power of the adjacency matrix we can see that $|W_k(2)|=2^{2k-1}(2^{k-1}-1)$. We omit the details; the calculation can be verified with computer algebra software.

Now, observe that, because $k$ is even, any path $\gamma$ from $W_k(2)$, when considered as a path in $G$, must pass through a diagonal edge and a vertical edge, in some order, possibly with a number of horizontal edges in between. Let us assume that the diagonal edge comes first (the other case is similar). A quick check shows that diagonal edges correspond to paths $\ast \to \lambda/\mu \to \ast$ in $\mathscr{E}_4$ with $\lambda$ even and vertical edges correspond to paths of that form with  $\lambda$ odd. Consequently, there is a subpath of $\gamma$ of the form $a/b \to \ast \to c/d$, where $a$ is even (so $b$ is odd) and $c$ is odd. By Lemma~\ref{lemma5}, there are $2^{(r-2)(2k-2)}$ lifts of $\gamma$ to $W_k(r)$; hence $|W_k(r)|=2^{(r-2)(2k-2)}2^{2k-1}(2^{k-1}-1)$, as required.
\end{proof}

The final ingredient we need to prove Theorem~\ref{theoremB} is the following result of Morier-Genoud \cite{Mo2021} (see also \cite{ShSoZa2024}).

\begin{theorem}\label{theorem4}
The number of tame regular friezes of width $m$ over $\mathbb{Z}/p\mathbb{Z}$ is
\[
\Phi_m(p) = 
\begin{cases}
[k]_{p^2}, &\text{for $m=2k+1$,}\\
(p-1)\displaystyle\binom{k}{2}_{\!\!p} &\text{for $m=2k$ with $k$ even and $p\neq2$,}\\[1pt]
(p-1)\displaystyle\binom{k}{2}_{\!\!p}+p^{k-1} &\text{for $m=2k$ with $k$ odd or $p=2$}.
\end{cases}
\]
\end{theorem}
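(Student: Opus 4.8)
The plan is to turn the count into paths in $\mathscr{E}_p$ and then into a concrete linear-algebra problem over $\mathbb{Z}/p\mathbb{Z}$. By Theorem~\ref{theorem3} and the discussion introducing $Y_m(n)$, the number of tame regular friezes of width $m$ over $\mathbb{Z}/p\mathbb{Z}$ equals $|Y_m(p)|/p$, where $Y_m(p)$ is the set of paths $1/0=v_0\to v_1\to\dots\to v_m=-1/0$ in $\mathscr{E}_p$. Writing $e_1=1/0$ and $e_2=0/1$ for the standard basis vectors, I would encode a path from $1/0$ by its continuant data: since consecutive vertices satisfy the edge relation, one checks that $v_{i+1}=c_iv_i-v_{i-1}$ for a unique scalar $c_i\in\mathbb{Z}/p\mathbb{Z}$, so the path corresponds bijectively to a choice of $v_1=a_1/1$ together with $(c_1,\dots,c_{m-1})$. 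Letting $M_i\in\textnormal{SL}_2(\mathbb{Z}/p\mathbb{Z})$ be the matrix with columns $v_{i-1}$ and $v_i$, this recursion reads $M_{i+1}=M_iR(c_i)$ with $R(c)=\left(\begin{smallmatrix}0&-1\\1&c\end{smallmatrix}\right)$ and $M_1=\left(\begin{smallmatrix}1&a_1\\0&1\end{smallmatrix}\right)$, so $M_m=M_1R(c_1)\cdots R(c_{m-1})$. As $M_1$ fixes both $e_1$ and $-e_1$ and $v_m$ is the second column of $M_m$, the endpoint condition $v_m=-1/0$ is independent of $a_1$ and reduces to $R(c_1)\cdots R(c_{m-1})e_2=-e_1$. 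Hence
\[
|Y_m(p)|/p=\#\{(c_1,\dots,c_{m-1})\in(\mathbb{Z}/p\mathbb{Z})^{m-1}:R(c_1)\cdots R(c_{m-1})e_2=-e_1\},
\]
and it remains to evaluate this count, which I denote $g_m$.

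To evaluate $g_m$ I would run a backward recursion on the path, peeling off the final two edges $v_{m-2}\to v_{m-1}\to -1/0$ in the spirit of Lemma~\ref{lemma6}. Splitting paths according to whether the second coordinate of $v_{m-2}$ vanishes separates a generic family, which is matched (up to a factor $p^2$ coming from free lifts of the removed edges, as in Lemma~\ref{lemma2}) with shorter paths of the same endpoint type, from an exceptional family where $v_{m-2}\in\{\pm1/0\}$. For odd $m=2k+1$ I expect this to yield the clean recursion $g_{2k+1}=1+p^2g_{2k-1}$ with base case $g_3=1$, whose solution is the geometric sum $g_{2k+1}=1+p^2+\dots+p^{2(k-1)}=[k]_{p^2}$, matching Theorem~\ref{theorem4}. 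The even case should follow the same scheme but requires tracking two interleaved families, according to whether $v_{m-2}$ is equivalent to $1/0$ or to $-1/0$, and I would expect this bookkeeping to assemble into the $q$-binomial $\binom{k}{2}_{p}$ together with a residual term $p^{k-1}$.

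The hard part will be the even-width analysis and the exceptional prime $p=2$. When $m=2k$ the endpoints $1/0$ and $-1/0$ are distinct but the backward recursion may terminate at either, so collapsing the raw geometric sums into $(p-1)\binom{k}{2}_{p}$ with the correct corrections is delicate and is where the parity of $k$ enters. The genuinely exceptional behaviour occurs at $p=2$, where $1/0=-1/0$ and the distinction between the two families disappears: already in the smallest even case $m=4$ the condition $R(c_1)R(c_2)R(c_3)e_2=-e_1$ reduces to $c_1=c_3$ and $c_1c_2=2$, which has $p-1$ solutions when $p\neq2$ but three solutions when $p=2$, since the obstruction $2\equiv0$ frees an extra parameter. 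This is exactly the origin of the additional summand $p^{k-1}$ in the $p=2$ row, and the main computational burden will be matching these counts to the stated closed $q$-analogue forms; I would confirm the final simplification with computer algebra, as the authors do elsewhere.
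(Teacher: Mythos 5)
First, note that the paper does not prove this statement at all: Theorem~\ref{theorem4} is imported verbatim from Morier-Genoud \cite{Mo2021} (see also \cite{ShSoZa2024}) and used as a black box in the proof of Theorem~\ref{theoremB}. So there is no in-paper argument to compare yours against, and your attempt has to stand on its own as a proof of a known but nontrivial result. Your opening reduction is correct: encoding a path by the recursion $v_{i+1}=c_iv_i-v_{i-1}$, observing that the choice of $v_1=a_1/1$ is free and does not affect the endpoint condition, and concluding that the frieze count equals $g_m=\#\{(c_1,\dots,c_{m-1}):R(c_1)\cdots R(c_{m-1})e_2=-e_1\}$ is the standard quiddity-sequence/continuant reformulation, and your $m=3$ and $m=4$ computations (including the $p=2$ anomaly) check out.

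However, the evaluation of $g_m$ is where the entire content of the theorem lives, and that part is missing. The odd-case recursion $g_{2k+1}=1+p^2g_{2k-1}$ is asserted rather than derived, and the mechanism you offer for it --- a factor $p^2$ ``coming from free lifts of the removed edges, as in Lemma~\ref{lemma2}'' --- is a misapplication: Lemma~\ref{lemma2} counts lifts between the distinct graphs $\mathscr{E}_{p^{s}}$ and $\mathscr{E}_{p^{r}}$ and says nothing about extending paths within the single graph $\mathscr{E}_p$. If you actually peel off the last two edges of a path $1/0\to 0/1\to\dots\to v_{m-2}\to v_{m-1}\to -1/0$, the edge conditions force $v_{m-1}=x/1$ with $a-bx=1$ where $v_{m-2}=a/b$; the case $b=0$ contributes $p$ times the number of length-$(m-2)$ paths ending at $+1/0$ (not at $-1/0$), and the case $b\neq 0$ contributes the number of length-$(m-2)$ paths ending anywhere off the fibre $\{\lambda/0\}$. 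The recursion therefore couples several endpoint counts and does not close in one line; making it close is precisely the transfer-matrix computation that constitutes the real proof. The even case, which carries the $q$-binomial coefficient, the parity of $k$, and the $p=2$ exception, is explicitly deferred (``I would expect this bookkeeping to assemble\dots'', ``I would confirm the final simplification with computer algebra''). As it stands the proposal is a reasonable plan with two verified base cases, not a proof; to repair it you would either need to carry out the full endpoint-distribution recursion (equivalently, diagonalise the relevant transfer operator on the vertices of $\mathscr{E}_p$ modulo the $\{\lambda/0\}$ fibre) or simply cite \cite{Mo2021}, as the paper does.
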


Let us complete the proof of Theorem~\ref{theoremB}. Theorem~\ref{theorem4} confirms the case $r=1$ from Theorem~\ref{theoremB}, so we assume instead that $r>1$. We must show that $|Y_m(p^r)|/p^r=\Phi_m(p^r)$ (which is true for $r=1$ by Theorem~\ref{theorem3}).

Suppose first that either $m$ is odd or $m\equiv 0\pmod{4}$ and $p\neq 2$ (the first two cases of Theorem~\ref{theoremB}). Then, by Lemmas~\ref{lemma5} and~\ref{lemma6}, we have $|Y_m(p^r)|=p^{(r-1)(m-2)}|Y_m(p)|$. Hence 
\[
\frac{|Y_m(p^r)|}{p^r}=\frac{p^{(r-1)(m-2)}|Y_m(p)|}{p^r}=p^{(r-1)(m-3)}|\Phi_m(p)|=|\Phi_m(p^r)|.
\]
Suppose instead that $m$ is even, and let $m=2k$. Assume for now that $k$ is odd (fourth case). We have $Y_{2k}(p^r)=Z_k(r)\cup Y_{2k}(p^r)\setminus Z_k(r)$, where $|Z_k(r)|$ is specified in \eqref{eqn3} and Lemma~\ref{lemma5} tells us that $|Y_{2k}(p^r)\setminus Z_k(r)|=p^{(r-1)(m-2)}|Y_{2k}(p)\setminus \Omega_{2k}(p)|$. Now, $|Y_{2k}(p)|=p|\Phi_{2k}(p)|$, so
\[
|Y_{2k}(p)\setminus \Omega_{2k}(p)|=|Y_{2k}(p)|-|\Omega_{2k}(p)|=\mleft(p(p-1)\displaystyle\binom{k}{2}_{\!\!p}+p^{k}\mright)-p^k=p(p-1)\displaystyle\binom{k}{2}_{\!\!p}.
\]
It follows that $|Y_{2k}(p^r)|/p^r=\Phi_{2k}(p^r)$.

Assume now that $k$ is even and $p=2$ (third case). We have $Y_{2k}(2^r)=W_k(r)\cup Y_{2k}(2^r)\setminus W_k(r)$, where $|W_k(r)|$ is specified in Lemma~\ref{lemma10} and, reasoning similarly to before, 
\[
|Y_{2k}(2^r)\setminus W_k(r)|=2^{(r-1)(m-2)}|Y_{2k}(2)\setminus \Omega_{2k}(2)|=2^{(r-1)(m-2)+1}\binom{k}{2}_{\!p}.
\]
 Once again we obtain $|Y_{2k}(p^r)|/p^r=\Phi_{2k}(p^r)$ (for $p=2$). This completes the proof of Theorem~\ref{theoremB}.

\begin{bibdiv}
\begin{biblist}

\bib{BoCu2024}{article}{
      title={Frieze patterns over finite commutative local rings}, 
      author={B\"ohmler, B.},
      author={Cuntz, M.},
      year={2024},
      eprint={https://arxiv.org/abs/2407.12596},
      archivePrefix={arXiv},
      primaryClass={math.CO}
}

\bib{Ma2024}{article}{
      title={Number of roots of the continuant over a finite local ring}, 
      author={Mabilat, Flavien},
      year={2024},
      eprint={https://arxiv.org/abs/2407.19317},
      archivePrefix={arXiv},
      primaryClass={math.CO},
}

\bib{Mo2021}{article}{
   author={Morier-Genoud, Sophie},
   title={Counting Coxeter's friezes over a finite field via moduli spaces},
   journal={Algebr. Comb.},
   volume={4},
   date={2021},
   number={2},
   pages={225--240},
}

\bib{ShSoZa2024}{article}{
      title={Frieze patterns and Farey complexes}, 
      author={Short, Ian},
      author={Van Son, Matty},      
      author={Zabolotskii, Andrei},
      year={2024},
      eprint={https://arxiv.org/abs/2312.12953},
      archivePrefix={arXiv},
      primaryClass={math.CO},
}

\end{biblist}
\end{bibdiv}
\end{document}